\numberwithin{equation}{section}
\theoremstyle{plain}
\newtheorem{thm}{Theorem}[section]
\newtheorem{lemma}[thm]{Lemma}
\newtheorem{prop}[thm]{Proposition}
\newtheorem{cor}[thm]{Corollary}
\newtheorem{claim}[thm]{Claim}
\theoremstyle{definition}
\newtheorem{defn}[thm]{Definition}
\newtheorem{remark}[thm]{Remark}
\newcommand{\cl}[1]{\mbox{\ensuremath{\mathbf{#1}}}\xspace}
\renewcommand{\P}{\cl{P}}
\newcommand{\NP}{\cl{NP}}
\newcommand{\coNP}{\cl{co}\text{-}\cl{NP}}
\DeclareMathOperator{\per}{per}
\definecolor{light-gray}{gray}{0.7}
\definecolor{v}{rgb}{0.28,0,0.72}
\definecolor{e}{rgb}{0,1,0.2}
\definecolor{r}{rgb}{1,0,0}
\def\ps@pprintTitle{%
  \let\@oddhead\@empty
  \let\@evenhead\@empty
  \def\@oddfoot{\reset@font\hfil\thepage\hfil}
  \let\@evenfoot\@oddfoot
}
\begin{document}

\title{Rotor-routing reachability is easy, chip-firing reachability is hard}
\author[l]{Lilla T\'othm\'er\'esz\fnref{fn1}}
\address[l]{MTA-ELTE Egerv\'ary Research Group, P\'azm\'any P\'eter s\'et\'any 1/C, Budapest, Hungary}
\ead{tmlilla@caesar.elte.hu}
\fntext[fn1]{LT was supported by the National Research, Development and Innovation Office of Hungary -- NKFIH, grant no.\ 132488. LT was partially supported by the Counting in Sparse Graphs Lendület Research Group of Rényi Institute.}

\begin{abstract}
Chip-firing and rotor-routing are two well-studied examples of abelian networks. We study the complexity of their respective reachability problems. 
We show that the rotor-routing reachability problem is decidable in polynomial time, and we give a simple characterization of when a chip-and-rotor configuration is reachable from another one.
For chip-firing, it has been known that the reachability problem is in $\P$ if we have a class of graphs whose period length is polynomial (for example, Eulerian digraphs). Here we show that in the general case, chip-firing reachability is hard in the sense that if the chip-firing reachability problem were in $\P$ for general digraphs, then the polynomial hierarchy would collapse to $\NP$.
We encode graphs by their adjacency matrix, and we encode ribbon structures ``succinctly'', only remembering the number of consecutive parallel edges.
\end{abstract}

\begin{keyword}chip-firing \sep rotor-routing \sep reachability \sep computational complexity
\MSC[2020] 68Q17 \sep 05C50 \sep 05C85
\end{keyword}

\maketitle


\section{Introduction}

Chip-firing and rotor-routing are two well-studied examples of abelian networks. Abelian networks are asynchronous networks of processors that sit in the vertices of a digraph and communicate through the edges. They are called ``abelian'' because the final state of the network does not depend on the order in which different processors process their input data. For an introduction to abelian networks, see \cite{Bond-Levine}.

In this paper, we study the complexity of the reachability problem of chip-firing and rotor-routing. Previously, the chip-firing reachability problem was shown to be in $\coNP$ \cite{chip-reach}, and in the special case of polynomial period length (which includes for example Eulerian digraphs), it was shown to be in $\P$ \cite{chip-reach,tezisem}. Here we show that in general the chip-firing reachability problem is hard: if it were solvable in polynomial time, then the polynomial hierarchy would collapse to $\NP$. (See Theorem \ref{thm:reachability_hard}.) To show this, we use the $\NP$-hardness of the related chip-firing halting problem, which was proved in \cite{coeulerian}.

For rotor-routing, reachability was known to be in $\P$ in the special case when the target configuration is recurrent \cite{alg_rotor}. Here we show that rotor-routing reachability is also decidable in polynomial time in the general case, and we give a combinatorial characterization for the reachability. (See Theorem \ref{thm:reachability_char}.) We note that, similarly to the case of chip-firing on Eulerian digraphs, it remains open whether one can determine the stopping configuration of a bounded rotor-routing game in polynomial time.

\subsection{Preliminaries on graphs}
Throughout this paper, $G$ will denote a directed graph with vertex set $V(G)$ and edge set $E(G)$. We allow multiple edges, but no loops.
We denote by $\deg^+(v)$ the out-degree of vertex $v$, and by $d(u,v)$ the number of edges pointing from $u$ to $v$. We encode graphs by their adjacency matrix. This means that if we give a graph $G$ as the input of an algorithm, then the size of the input is $O(|V(G)|+\log(|E(G)|))$, and $|E(G)|$ might be exponential in the input size.

A digraph is said to be strongly connected if for each $u,v\in V(G)$ there is a directed path leading from $u$ to $v$. Each digraph has a unique decomposition into strongly connected components. A component is called a \emph{sink component}, if there is no edge leaving the component. Note that a digraph always has at least one sink-component. A vertex is called a \emph{sink} if its out-degree is zero. In this case, it is a one-element sink component.

We denote by $\mathbb{Z}^{V(G)}$ the set of integer vectors whose coordinates are indexed by the vertices of $G$. $\mathbb{Z}^{V(G)}_{\geq 0}$ denotes the set of vectors with nonnegative integer coordinates. For a vertex $v$, $\mathbf{1}_v$ denotes the vector where the coordinate of $v$ is 1, and the rest of the coordinates are 0.

Both for chip-firing and for rotor-routing, the \emph{Laplacian matrix} of the graph will play an important role. We denote the Laplacian matrix of the digraph $G$ by $L_G$. This is the matrix with coordinates
	$$(L_G)_{uv} = \left\{\begin{array}{cl} 
	-\deg^+ (v) & \text{if $u=v$,}  \\
	d(v,u)  & \text{if $u\neq v$}.\\
	\end{array} \right.
	$$

$0$ is always an eigenvalue of the Laplacian matrix. A non-negative vector $p \in \mathbb{Z}^{V(G)}_{\geq 0}$ will be called a \emph{period vector} for $G$ if  $L_Gp=0$. A non-zero period vector is called \emph{primitive} if its entries have no non-trivial common divisor. The following is known.

\begin{prop}\cite[3.1 and 4.1]{BL92} \label{prop::period}
For a strongly connected digraph $G$ there exists a unique primitive period vector $p_G$, moreover, its coordinates are positive. 
For a general digraph $G$, if $G_1, \dots, G_k$ are the sink components of $G$ and a vector $z \in \mathbb{Z}^{V(G)}$ satisfies $L_G z = 0$ then $z = \sum_{i=1}^k \lambda_i p_i$, where for $i \in \{1, \dots, k\}$, $\lambda_i\in \mathbb{Z}$ and $p_i$ is the primitive period vector of $G_i$ restricted to $V(G_i)$ and zero 
elsewhere.
\end{prop}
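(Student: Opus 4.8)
The plan is to treat the strongly connected case first and then deduce the general statement from a computation of $\rank L_G$. I would begin with two elementary observations about $L_G$. First, summing the $v$-th column of $L_G$ gives $-\deg^+(v)+\sum_{u\ne v}d(v,u)=0$, so $\mathbf{1}^{T}L_G=0$ and hence $\rank L_G\le |V(G)|-1$. Second, transposing, $L_G=-\widetilde L^{T}$, where $\widetilde L:=\operatorname{diag}(\deg^+)-A$ is the out-degree Laplacian and $A_{uv}=d(u,v)$; in particular $\rank L_G=\rank\widetilde L$. The key technical step is a nonsingularity lemma: if $\{r_1,\dots,r_k\}$ contains exactly one vertex from each sink component of $G$, then the principal submatrix of $\widetilde L$ on $V(G)\setminus\{r_1,\dots,r_k\}$ is nonsingular. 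I would prove this by a maximum principle. If $y$ lies in the kernel of this submatrix, pick a coordinate $u^{*}$ where $y$ is maximal among the surviving coordinates; after replacing $y$ by $-y$ if necessary we may assume $y_{u^{*}}>0$ (and $u^{*}$ is not a sink, else $u^{*}=r_i$ for some $i$), and then the $u^{*}$-equation $\deg^+(u^{*})\,y_{u^{*}}=\sum d(u^{*},v)\,y_v$, where the sum runs over surviving $v$, forces $u^{*}$ to have no out-edge to any $r_i$ and forces $y_v=y_{u^{*}}$ for every out-neighbour $v$ of $u^{*}$. Thus the set of maximizing coordinates is nonempty, avoids all $r_i$, and is closed under taking out-neighbours in $G$; but every vertex of $G$ reaches a sink component (follow out-edges in the condensation), hence reaches some $r_i$, a contradiction. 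Therefore $\rank L_G=\rank\widetilde L\ge |V(G)|-k$.

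For strongly connected $G$ this lemma (with $k=1$) gives $\rank L_G=|V(G)|-1$, so $\dim\ker L_G=1$. To exhibit a strictly positive element of this line I would invoke Perron--Frobenius (or the Matrix--Tree theorem): the random walk on $G$ has an irreducible row-stochastic transition matrix $P$ with $P_{vu}=d(v,u)/\deg^+(v)$, hence a strictly positive stationary vector $\pi$, and then $p$ defined by $p_v=\pi_v/\deg^+(v)$ satisfies $L_Gp=0$ and $p>0$ (the one-vertex case being trivial, with $p=\mathbf{1}$). Since $\ker L_G$ is a rational line containing a positive vector, it has a unique primitive integer generator $p_G$, which is positive; and any period vector, being a nonnegative vector on this line, is a positive rational multiple of $p_G$, hence equals $p_G$ when primitive. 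This gives the first assertion.

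For the general case let $G_1,\dots,G_k$ be the sink components and let $p_i\in\mathbb{Z}^{V(G)}$ be the primitive period vector of $G_i$ extended by zeros. One checks $L_Gp_i=0$: on $V(G_i)$ this holds because $G_i$ is an induced subgraph on which $\deg^+_G=\deg^+_{G_i}$ (no edge leaves a sink component), so the relevant block of $L_G$ is exactly $L_{G_i}$; outside $V(G_i)$ it holds because no edge leaves $G_i$. The $p_i$ have pairwise disjoint nonzero supports, so they are linearly independent and $\dim\ker L_G\ge k$; together with $\rank L_G\ge |V(G)|-k$ from the lemma this forces $\dim\ker L_G=k$, so the $p_i$ form a basis. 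Finally, if $z\in\mathbb{Z}^{V(G)}$ satisfies $L_Gz=0$, write $z=\sum_i\lambda_ip_i$ with $\lambda_i\in\mathbb{Q}$; restricting to $V(G_i)$ shows that $\lambda_i$ times a primitive integer vector is integral, so $\lambda_i\in\mathbb{Z}$, as claimed.

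The main obstacle is the nonsingularity lemma. The essential point is to transpose $L_G$ to the out-degree Laplacian $\widetilde L$, so that the diagonal out-degrees match the off-diagonal out-edges and the maximum principle propagates along directed paths, together with the elementary fact that in any digraph every vertex reaches a sink component. Everything else is bookkeeping about which principal blocks of $L_G$ coincide with Laplacians of subgraphs.
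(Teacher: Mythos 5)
Your proof is correct. Note first that the paper offers no proof of this proposition at all --- it is quoted verbatim from \cite{BL92} --- so there is no in-paper argument to compare against, and I am judging your write-up on its own merits. Your route is the clean linear-algebraic one: the upper bound $\rank L_G\le |V(G)|-1$ from $\mathbf{1}^{T}L_G=0$, and the lower bound $\rank L_G\ge |V(G)|-k$ from the nonsingularity of the principal submatrix of the out-degree Laplacian obtained by deleting one representative per sink component, together pin down $\dim\ker L_G=k$; the extended vectors $p_i$ visibly lie in the kernel (since no edge leaves a sink component, the relevant block of $L_G$ is $L_{G_i}$ and the off-block column entries vanish) and have disjoint supports, so they form a basis; and integrality of the coefficients follows from primitivity of each $p_i$ by the usual gcd argument. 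The maximum-principle proof of the key lemma is sound: transposing to $\widetilde L$ is precisely what makes the diagonal entry $\deg^+(u)$ dominate the row sum of surviving off-diagonal multiplicities, so the set of maximizers is closed under out-neighbours and must reach a deleted representative, and you correctly dispose of the degenerate case $\deg^+(u^{*})=0$ by noting that a sink is a singleton sink component and hence is deleted. Positivity of the kernel vector in the strongly connected case via the stationary distribution of the associated random walk (with the reweighting $p_v=\pi_v/\deg^{+}(v)$, which indeed satisfies $L_Gp=0$ for the in-edge convention used here) is also fine; Perron--Frobenius, or alternatively the Markov chain tree theorem, is the only external ingredient. The one step you state without spelling out --- that a nonnegative integral vector on the line $\mathbb{Q}p_G$ which is primitive must equal $p_G$ --- is the same gcd observation you make explicitly in the last paragraph, so nothing is missing.
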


For a strongly connected digraph $G$, let us denote the sum of the coordinates of $p_G$ by $\per(G)$. For a general digraph $G$ let $\per(G)=\sum_{i=1}^\ell \per(G_i)$ where $G_1,\dots, G_\ell$ are the strongly connected components of $G$.  

It is easy to see that for a connected Eulerian digraph, the constant 1 vector is the primitive period vector, hence in this case $\per(G)=|V(G)|$. However, in general $\per(G)$ may be exponentially large (for an example see the class of digraphs constructed in the proof of Theorem 2 in \cite{Pham_rotor}).
Despite this, the period vector can be computed
in polynomial time, as shown below.

\begin{prop}\label{p:period_szamolhato}
The primitive period vectors of the sink components can be computed in polynomial time in the input size.
\end{prop}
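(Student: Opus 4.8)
The plan is to reduce everything to exact linear algebra over $\mathbb{Q}$, exploiting that by Proposition \ref{prop::period} the Laplacian of a strongly connected digraph has a one-dimensional kernel.

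First I would compute the decomposition of $G$ into strongly connected components. The adjacency matrix may have exponentially large entries, but only the zero/nonzero pattern matters for this step, so any standard linear-time component algorithm runs in time polynomial in $|V(G)|$; passing to the condensation, the sink components $G_1,\dots,G_k$ are exactly those with no outgoing edge, and there are at most $|V(G)|$ of them, each with at most $|V(G)|$ vertices.

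Next, for each sink component $G_i$ I would write down $L_{G_i}$, an integer matrix of order $|V(G_i)|$ whose entries have absolute value at most $|E(G)|$, hence bit-length polynomial in the input size. Since $\ker L_{G_i}$ is spanned by $p_{G_i}$, I would produce a nonzero rational vector $q$ with $L_{G_i}q=0$ by Gaussian elimination, and then clear denominators and divide by the gcd of its coordinates; because $p_{G_i}$ has strictly positive coordinates, the primitive integer vectors in the kernel are exactly $\pm p_{G_i}$, so I would return the one whose coordinates are positive. (Equivalently, one could invoke the matrix--tree theorem to identify $p_{G_i}$, up to scaling, with a vector of cofactors of $L_{G_i}$, but this leads to the same kind of determinant computation.)

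The one point that needs care --- and the only real obstacle --- is that a naive Gaussian elimination might generate intermediate numerators and denominators of exponential bit-length, so one has to argue that the elimination is genuinely polynomial-time, not merely finite. This is the classical fact that elimination can be organized in fraction-free form (Bareiss) so that every intermediate quantity is a minor of $L_{G_i}$; by Hadamard's inequality such a minor is at most $(\sqrt{|V(G)|}\,|E(G)|)^{|V(G)|}$ in absolute value, so its bit-length is $O(|V(G)|(\log|V(G)|+\log|E(G)|))$, which is polynomial in $|V(G)|+\log|E(G)|$. The same bound shows that the coordinates of the output vectors $p_{G_i}$ have polynomial size, so the whole procedure runs in polynomial time.
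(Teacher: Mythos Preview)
Your proposal is correct and follows essentially the same route as the paper: compute the strongly connected components, then for each sink component solve $L_{G_i}p=0$ over the integers and divide by the gcd. The only difference is cosmetic: where the paper cites a black-box result (Gr\"otschel--Lov\'asz--Schrijver) for polynomial-time integer linear algebra, you spell out the Bareiss/Hadamard argument explicitly.
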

\begin{proof}
By Tarjan's algorithm \cite{Tarjan}, the strongly connected components, and hence the sink components can be computed in polynomial time.
By \cite[Theorem 1.4.21]{groetschel-lovasz-shrijver}, we can compute in polynomial time an integer solution $\tilde{p}_i$ for $L_{G_i}p=0$ where $G_1 \dots G_k$ are the sink components. One can then compute the greatest common divisor of the coordinates for each $\tilde{p}_i$ and divide to get $p_i$.
\end{proof}

\section{Chip-firing}

In a chip-firing game we consider a digraph $G$ with a pile of chips on each of its nodes. A position of the game, called a \emph{chip configuration} is described by a vector $x \in \mathbb{Z}^{V(G)}$, where $x(v)$ is interpreted as the number of chips on vertex $v \in V(G)$, which might be negative. 

The basic move of the game is \emph{firing} a vertex. It means that this vertex passes a chip to its neighbors along each outgoing edge, and so its number of chips decreases by its out-degree. In other words, firing a vertex $v$ transforms the chip configuration $x$ to $x + L_G\mathbf{1}_v$.

The firing of a vertex $v \in V$ is \emph{legal} with respect to a chip configuration $x$, if $v$ has a non-negative amount of chips after the firing (i.e.\ $x(v) \geq \deg^+(v)$). A \emph{legal game} is a sequence of configurations in which each configuration is obtained from the previous one by a legal firing. For a legal game, let us call the vector $f \in \mathbb{Z}^{V(G)}_{\geq 0}$, where $f(v)$ equals the number of times $v$ has been fired, the \emph{firing vector} of the game.
A game terminates if no firing is legal with respect to the last configuration.
The most appealing property of the chip-firing game is the following ``abelian'' property.

\begin{thm}\cite[Remark 2.4]{BLS91} \label{t:chip-firing_commutative}
From a given initial chip configuration, either every legal game can be continued indefinitely, or every legal game terminates after finitely many steps. The firing vector of every maximal legal game is the same.
\end{thm}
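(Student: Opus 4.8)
The plan is to prove the theorem by a local commutation (``diamond'') argument. The cornerstone is the following simple observation: if a vertex $u$ can be legally fired from a configuration $x$, and $(v_1,\dots,v_n)$ is a legal game from $x$ in which $u$ is never fired, then $u$ can still be legally fired after $(v_1,\dots,v_n)$. Indeed, firing any vertex $v\neq u$ changes the number of chips on $u$ by $(L_G\mathbf{1}_v)_u=d(v,u)\geq 0$ (there are no loops, so the fired vertex gives nothing to itself), hence $u$ only accumulates chips during the game; since $x(u)\geq\deg^+(u)$ to begin with, the same holds at the end.

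The main step is the following claim: \emph{if $(v_1,\dots,v_n)$ is a legal game from $x$ with firing vector $f$, and $(w_1,\dots,w_m)$ is a \emph{maximal} legal game from $x$ with firing vector $g$, then $f\leq g$ coordinatewise}. I would prove this by induction on $n$, the case $n=0$ being trivial. For the inductive step, apply the observation to $u=v_1$: the maximal game cannot be extended, yet $v_1$ is legally fireable from $x$, so necessarily $g(v_1)\geq 1$. Let $w_j$ be the first occurrence of $v_1$ in $(w_1,\dots,w_m)$, so $v_1\notin\{w_1,\dots,w_{j-1}\}$. Since firing $v_1$ only adds chips to vertices other than $v_1$, the sequence $(v_1,w_1,\dots,w_{j-1},w_{j+1},\dots,w_m)$ is again a legal game from $x$; it has firing vector $g$, hence reaches the same terminal configuration $x+L_Gg$ as the original maximal game and is therefore maximal as well. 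Now firing $v_1$ carries $x$ to $x'=x+L_G\mathbf{1}_{v_1}$, and from $x'$ the sequence $(v_2,\dots,v_n)$ is a legal game of length $n-1$ with firing vector $f-\mathbf{1}_{v_1}$, while $(w_1,\dots,w_{j-1},w_{j+1},\dots,w_m)$ is a maximal legal game with firing vector $g-\mathbf{1}_{v_1}$. By the induction hypothesis $f-\mathbf{1}_{v_1}\leq g-\mathbf{1}_{v_1}$, i.e.\ $f\leq g$.

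Finally I would deduce the theorem from the claim. A legal game of length $k$ has firing vector summing to $k$. If there existed both an infinite legal game and a finite maximal legal game with firing vector $g$, then every finite prefix of the infinite game would have firing vector $\leq g$, bounding its length by $\sum_v g(v)$, a contradiction. So either some legal game is infinite, in which case no finite legal game is maximal and every legal game extends to an infinite one; or every legal game is finite, in which case the tree of legal games (finitely branching, as $|V(G)|<\infty$) has no infinite branch, so by K\"onig's lemma it is finite, maximal legal games exist, and applying the claim to any two of them yields $g_1\leq g_2$ and $g_2\leq g_1$, hence $g_1=g_2$.

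I expect the heart of the argument, and the only place requiring care, to be the commutation step inside the induction: checking that $v_1$ may be moved to the front of the maximal $w$-game while preserving both legality and maximality. Once that is established, the rest is bookkeeping.
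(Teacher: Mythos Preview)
The paper does not give its own proof of this theorem: it is quoted as a known result from \cite[Remark~2.4]{BLS91} and used as a black box. So there is nothing in the paper to compare your argument against.

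That said, your argument is correct and is essentially the classical exchange (``diamond'') proof. The commutation step you flag as delicate is handled properly: since $w_1,\dots,w_{j-1}$ are all distinct from $v_1$, pre-firing $v_1$ only increases their chip counts, so they remain legal; after $v_1,w_1,\dots,w_{j-1}$ the configuration agrees with the one after $w_1,\dots,w_j$ (same multiset of firings), so the tail $w_{j+1},\dots,w_m$ is legal and the resulting game is still maximal. The K\"onig's-lemma finish is also fine. One could avoid K\"onig's lemma entirely by observing that once a single maximal legal game with firing vector $g$ exists, your main claim already bounds the length of every legal game by $\sum_v g(v)$, so every legal game extends to a maximal one; but this is a cosmetic difference.
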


In this section we will be interested in the complexity of the chip-firing reachability problem.
We say that a chip configuration $x_2$ is reachable from a chip configuration $x_1$ if there is a legal game starting in $x_1$ and ending in $x_2$. We denote this by $x_1\leadsto x_2$.

The reachability problem asks whether for chip configurations $x_1$ and $x_2$ on a digraph $G$ we have $x_1\leadsto x_2$.
In the case if the period length of a graph class is polynomial, the reachability problem is known to be in $\P$.

\begin{thm}\cite[Theorem 2.3.13]{tezisem}. Let $G$ be a digraph, and $x$ and $y$ chip configurations on $G$. There is an algorithm
that decides whether $x \leadsto y$, and has a running time which is a polynomial of the input size and the period length of $G$.
\end{thm}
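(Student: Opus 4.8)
The plan is to reduce reachability to a single structural question about a canonical firing vector. Any legal game from $x$ to $y$ has a firing vector $f\in\mathbb{Z}^{V(G)}_{\ge 0}$, and since firing $v$ once changes the configuration by $L_G\mathbf{1}_v$, such a game exists only if $L_G f=y-x$. Conversely, a non-negative integer solution of $L_G f=y-x$ need not come from a legal game: one also needs the $\sum_v f(v)$ firings to be \emph{schedulable} from $x$, i.e.\ performable in some order so that each is legal. Hence $x\leadsto y$ if and only if $L_G f=y-x$ has a non-negative integer solution that is schedulable from $x$; if there is no non-negative integer solution, then $x\not\leadsto y$.

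The key claim I would establish is: \emph{$x\leadsto y$ if and only if the coordinate-wise minimal non-negative integer solution $f^{\min}$ of $L_G f=y-x$ is schedulable from $x$}. The vector $f^{\min}$ is well defined by Proposition \ref{prop::period}: the kernel of $L_G$ over $\mathbb{Z}$ is spanned by the period vectors $p_1,\dots,p_k$ of the sink components, each supported on its own component and strictly positive there, so from any integer solution the non-negative solutions are obtained by adding non-negative multiples of the $p_i$, and there is a unique least one. One direction is immediate. For the other, given a legal game $\Gamma$ from $x$ to $y$ with firing vector $f\ge f^{\min}$, I would use the abelian property (Theorem \ref{t:chip-firing_commutative}, in its form for games where every vertex is fired at most a prescribed number of times) to reorder $\Gamma$ so that first come all firings outside the sink components, then all firings inside $G_1$, then those inside $G_2$, and so on --- this stays legal because a firing inside a sink component never sends a chip outside it, and because performing a vertex's chip donations earlier never destroys legality. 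What remains is a legal game on the non-sink part with a forced firing vector (it equals $f^{\min}$ there, since the $p_i$ vanish off the sink components), followed for each $i$ by a legal game \emph{inside the strongly connected digraph $G_i$} with firing vector $f^{\min}|_{V(G_i)}+\lambda_i p_i$, $\lambda_i\ge 0$. It then suffices to prove, and iterate within each $G_i$, the lemma: \emph{in a strongly connected digraph, if some legal game has firing vector at least a period vector $p$, then some legal game with the same endpoints has firing vector smaller by exactly $p$.} This yields that $f^{\min}$ itself is schedulable from $x$.

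The algorithm follows: compute the strongly connected components and the primitive period vectors of the sink components in polynomial time (Proposition \ref{p:period_szamolhato}); decide whether $L_G f=y-x$ has an integer solution and, if so, compute one and hence $f^{\min}$ by integer linear algebra (Hermite normal form, \cite[Theorem 1.4.21]{groetschel-lovasz-shrijver}); then test schedulability of $f^{\min}$ from $x$ by running the chip-firing game from $x$ under the rule ``repeatedly fire, in parallel bulk rounds, every vertex that is legal and has not yet reached its quota $f^{\min}(v)$''. By the abelian property this process is confluent and terminates, and $f^{\min}$ is schedulable precisely if it terminates having fired each $v$ exactly $f^{\min}(v)$ times. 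The number of rounds is polynomial in $|V(G)|$ and $\per(G)$ --- this is exactly the point where the period length enters the running time --- and each round is a polynomial-time computation, so the total running time is polynomial in the input size and $\per(G)$.

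The main obstacle is the period-round removal lemma in the strongly connected case; everything else is bookkeeping together with the cited linear-algebra and period-computation facts. To prove that lemma one works with an explicit legal firing sequence realizing the game: since its firing vector dominates $p$ every vertex is fired, and one seeks within the sequence a cyclically shiftable block realizing one full $p$-round that can be excised while keeping the remaining firings legal and leaving their net effect unchanged (automatic since $L_G p=0$). Making ``cyclically shiftable block'' precise and showing one always exists is the delicate step; the diamond/abelian property for bounded games is the tool that lets one normalize the firing sequence enough for the exchange to go through. A secondary point is pinning down the polynomial bound on the number of parallel bulk rounds of the bounded game in terms of $|V(G)|$ and $\per(G)$.
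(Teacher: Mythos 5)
Your characterization is the right one and follows the same route the cited proof takes: $x\leadsto y$ iff $L_Gf=y-x$ has a non-negative integer solution and the unique reduced (equivalently, coordinate-wise minimal non-negative) solution $f^{\min}$ is realizable as the firing vector of a legal game, which by Lemma \ref{l:korlatos_jatek_moho} can be tested by playing the $f^{\min}$-bounded game greedily. However, the step you single out as the ``main obstacle'' --- excising one period-round from a legal game --- is exactly Lemma \ref{l:per_vektor_kihagyhato}, i.e.\ \cite[Lemma 4.3]{BL92}, which this paper already cites and uses: one simply deletes the \emph{first} $p(v)$ occurrences of each vertex $v$ and checks legality by induction (the analogous rotor-routing Lemma \ref{l:rr_per_vektor_kihagyhato} is proved this way in the text). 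Applying it with $p=p_i$ (the period vector of a sink component extended by zero) directly takes any non-negative solution down to $f^{\min}$ without changing the endpoints, since $L_Gp_i=0$. Your preliminary reordering of the game by strongly connected components, and the search for a ``cyclically shiftable block'', are therefore unnecessary machinery for a lemma that is already available with a five-line proof; as written, that part of your argument is also the least substantiated.

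The genuine gap is the running-time analysis, which is precisely where the period length enters and where the real content of the theorem lies; you defer it as ``a secondary point''. Note that $f^{\min}$ can have entries that are exponential in the input size even when $\per(G)$ is constant: take a single edge $v\to s$ with $x(v)=2^n$ chips; then $f^{\min}(v)=2^n$ while $\per(G)=2$. Hence a bulk round in which every eligible vertex fires \emph{once} gives exponentially many rounds, so your stated round bound is false under that reading; and if a round instead fires each eligible vertex the maximum admissible number of times, the claim that the number of such rounds is polynomial in $|V(G)|$ and $\per(G)$ is a nontrivial assertion requiring its own proof (it is the analogue of the analysis in \cite{chip-reach} for the Eulerian case, and is exactly what Theorem 2.3.13 of \cite{tezisem} must establish). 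Without an argument bounding the number of rounds --- or some other way to simulate the $f^{\min}$-bounded game without performing firings one by one --- the proposal establishes the combinatorial characterization but not the claimed algorithm.
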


Here, we show that unless the polynomial hierarchy collapses to $\NP$, there cannot be a polynomial algorithm for the reachability problem for general digraphs.
\begin{thm}\label{thm:reachability_hard}
Unless the polynomial hierarchy collapses to $\NP$, there is no polynomial algorithm that decides the chip-firing reachability problem on strongly connected digraphs.
\end{thm}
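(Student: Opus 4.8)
The plan is to reduce the complement of the chip-firing halting problem to the reachability problem. By \cite{coeulerian} the halting problem --- given a strongly connected digraph $G$ and a nonnegative configuration $c$, decide whether the legal game started at $c$ terminates --- is $\NP$-hard, so its complement is $\coNP$-hard; I will reduce that complement to reachability by a \emph{disjunctive} reduction, showing that the game from $c$ fails to terminate iff at least one of polynomially many explicit reachability instances (all on $G$ itself) is a ``yes''. This makes the reachability problem $\coNP$-hard, and since it is in $\coNP$ by \cite{chip-reach}, it is then $\coNP$-complete; hence a polynomial algorithm for it would yield $\coNP=\P$, thus $\P=\NP$, thus a collapse of the polynomial hierarchy to $\NP$. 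Note the reduction leaves $G$ unchanged, so it stays within strongly connected digraphs, as the theorem requires.

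The combinatorial core is the characterization: for a strongly connected digraph $G$ and a nonnegative configuration $c$, the legal game started at $c$ is infinite if and only if $c\leadsto c$ by some legal game of positive length. The ``if'' direction is immediate from Theorem~\ref{t:chip-firing_commutative}, since a positive-length legal game $c\leadsto c$ can be iterated indefinitely. For ``only if'' one uses that the total number of chips is conserved and --- because $c\ge 0$ --- every configuration occurring in a legal game from $c$ is again nonnegative, so only finitely many configurations are reachable from $c$; an infinite legal game must therefore revisit some configuration $z$, which shows $z\leadsto z$ by a positive-length game, and one then has to ``close up'' the initial segment $c\leadsto z$ to conclude $c\leadsto c$ by a positive-length game. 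This closing-up step --- equivalently, that on a strongly connected digraph a configuration reachable from one that is reachable from itself is again reachable from itself --- is the delicate point, and is where I would invoke the structural analysis of \cite{BL92,tezisem}.

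Given the characterization the reduction is short. A positive-length legal game from $c$ must start by legally firing some vertex $v$, i.e.\ $c(v)\ge\deg^+(v)$, and afterwards the configuration is $c+L_G\mathbf{1}_v$. Hence $c$ is reachable from itself by a positive-length legal game exactly when there is a vertex $v$ with $c(v)\ge\deg^+(v)$ and $c+L_G\mathbf{1}_v\leadsto c$. So on input $(G,c)$ I ask the reachability oracle about the at most $|V(G)|$ instances $(c+L_G\mathbf{1}_v,\,c)$ with $v$ legal in $c$, and report that the game halts precisely when every answer is ``no''. (A single many-one instance is likely obtainable with an auxiliary selector gadget, but this is not needed for the complexity conclusion.)

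The point of routing the argument this way --- and the reason the obvious approach fails --- is that one cannot reduce halting to reachability by asking whether $c$ reaches the unique stable configuration it would stabilize to: that configuration is not computable in polynomial time, since its firing vector can be exponentially large in the input size (as $\per(G)$ already can be), and computing it is essentially as hard as the halting problem itself. The self-reachability characterization never names any stabilization, at the cost that the whole proof rests on the ``infinite $\iff$ reachable from itself'' equivalence, whose forward implication is the step I would carry out most carefully.
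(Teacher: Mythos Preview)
Your reduction from recurrence to reachability (fire one active vertex $v$ and ask whether $c+L_G\mathbf 1_v\leadsto c$) is exactly what the paper does in Claim~\ref{cl:reachability_harder_than_recurrence}; that part is fine. The gap is in your ``combinatorial core'': the equivalence \emph{non-halting $\Leftrightarrow$ $c\leadsto c$ by a positive-length legal game} is false, even for nonnegative $c$ on strongly connected digraphs, so the ``closing-up'' step you defer to \cite{BL92,tezisem} cannot be carried out.

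Here is a concrete counterexample. Take $V=\{1,2,3\}$ with edges $1\to 2$, $1\to 3$, $2\to 3$, $3\to 1$; this is strongly connected with $\deg^+(1)=2$, $\deg^+(2)=\deg^+(3)=1$ and primitive period vector $p_G=(1,1,2)$. Let $c=(0,2,0)$. Then $c$ is non-halting: firing $2$ gives $(0,1,1)$, and from there the sequence $2,3,3,1$ is a legal period returning to $(0,1,1)$. But $c$ is \emph{not} recurrent. Any legal game $c\leadsto c$ would have firing vector a positive multiple of $p_G$, hence by Lemma~\ref{l:per_vektor_kihagyhato} there would be one with firing vector exactly $p_G$; yet the $p_G$-bounded game from $c$ is forced to fire $2$ then $3$, reaching $(1,1,0)$ with firing vector $(0,1,1)$, and is then stuck (vertex~$1$ has one chip but needs two, vertex~$2$ is at its bound, vertex~$3$ has no chip). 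So no positive-length legal game returns to $c$, and your disjunction of reachability queries would wrongly declare $c$ halting.

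The paper avoids this trap by not trying to certify non-halting of $x$ via recurrence of $x$ itself. Instead, Proposition~\ref{prop:coNP} uses as certificate a recurrent $y$ that is merely \emph{linearly equivalent} to $x$ (such a $y$ always exists along an infinite game, by pigeonhole), and checks $x\sim y$ in polynomial time via Proposition~\ref{prop:lin_ekv_in_P}. This yields only that halting lies in $\coNP$ (not in $\P$), which together with its $\NP$-completeness gives $\NP=\coNP$ and the stated collapse. Your route, had the characterization been true, would have given the stronger $\P=\NP$; but as the example shows, that route does not go through.
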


To show this, we first show that deciding recurrence is easier than deciding reachability, then we show that deciding recurrence already has the above mentioned complexity.

Let us call a chip configuration $x$ recurrent if starting from $x$, there is a nonempty legal game leading back to $x$. 

\begin{claim}\label{cl:reachability_harder_than_recurrence}
If there were a polynomial algorithm for deciding the reachability problem for strongly connected digraphs, then we could decide in polynomial time whether a given chip configuration $x$ on a strongly connected digraph is recurrent.
\end{claim}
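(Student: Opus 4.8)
The plan is to reduce the recurrence question to the reachability question by a single, easily-computed reachability query. Given a strongly connected digraph $G$ and a chip configuration $x$, recall that $x$ is recurrent precisely when there is a nonempty legal game starting and ending at $x$. The naive idea ``ask whether $x \leadsto x$'' does not work directly, because the empty game trivially witnesses $x \leadsto x$; we need to force the game to be nonempty. The natural fix is to first fire something once, legally if possible, and then ask whether we can return.

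Concretely, here is the approach I would take. Since $G$ is strongly connected, every vertex has positive out-degree, so the primitive period vector $p_G$ is strictly positive (Proposition \ref{prop::period}), and $L_G p_G = 0$. Observe that firing every vertex $v$ a total of $p_G(v)$ times, in any legal order, returns any configuration to itself; the question of recurrence of $x$ is therefore equivalent to asking whether the ``full'' firing sequence with firing vector $p_G$ can be carried out legally starting from $x$. But rather than reason about legality of that long sequence directly, I would exploit the reachability oracle. Pick any vertex $v_0$ and consider the configuration $x' = x + L_G \mathbf{1}_{v_0}$, i.e.\ the result of firing $v_0$ once (this is computable in polynomial time from the adjacency matrix). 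I claim $x$ is recurrent if and only if $x(v_0) \ge \deg^+(v_0)$ for some vertex $v_0$ that is fired in the return game \emph{and} $x' \leadsto x$ for that choice — more cleanly: $x$ is recurrent iff there exists $v_0$ with $x(v_0)\ge \deg^+(v_0)$ and $x + L_G\mathbf 1_{v_0} \leadsto x$. Running the reachability oracle for each of the at most $|V(G)|$ candidate vertices $v_0$ (and first checking the legality condition $x(v_0)\ge\deg^+(v_0)$) then decides recurrence in polynomial time.

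So the steps are: (1) check whether any vertex $v_0$ is legally fireable from $x$; if none is, then no nonempty legal game exists and $x$ is not recurrent. (2) For each legally fireable $v_0$, compute $x' = x + L_G\mathbf 1_{v_0}$ and query the oracle for $x' \leadsto x$. (3) Output ``recurrent'' iff some query returns yes. Correctness of the ``if'' direction is immediate: firing $v_0$ then playing the witnessing game from $x'$ to $x$ is a nonempty legal game from $x$ to $x$. For the ``only if'' direction, if $x$ is recurrent, take any nonempty legal return game; its first move is a legal firing of some vertex $v_0$, so $v_0$ is legally fireable from $x$ and the remainder of the game witnesses $x + L_G\mathbf 1_{v_0} \leadsto x$.

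The main thing to get right is the argument that this really captures recurrence without any hidden subtlety about the abelian property — in particular, that ``first legal move is a firing of $v_0$'' does give a legal game from $x'$ to $x$, which is immediate since legality of each subsequent firing depends only on the current configuration. There is essentially no obstacle here; the only care needed is the bookkeeping that all the arithmetic ($x + L_G\mathbf 1_{v_0}$, comparing $x(v_0)$ with $\deg^+(v_0)$) is polynomial in the input size even though $|E(G)|$ may be exponential, which holds because these quantities are read off directly from the adjacency matrix. Finally, the at most $|V(G)|$ oracle calls keep the whole procedure polynomial.
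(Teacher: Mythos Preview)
Your argument is correct. The only difference from the paper is that the paper makes a \emph{single} reachability query: it picks one legally fireable vertex $v$ and shows that $x$ is recurrent iff $x+L_G\mathbf 1_v\leadsto x$ for \emph{that} $v$. Justifying the single-query version requires the abelian machinery (Lemmas \ref{l:per_vektor_kihagyhato} and \ref{l:korlatos_jatek_moho}), since one must argue that if some nonempty legal return game exists, then one whose first move is the chosen $v$ also exists. Your version sidesteps this by looping over all legally fireable $v_0$, which costs at most $|V(G)|$ oracle calls but makes the correctness proof completely elementary: the first step of any witnessing return game is one of your candidates. Both approaches are polynomial; yours trades a few extra oracle calls for avoiding the bounded-game lemmas.
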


For this, we need a couple of definitions and lemmas.

 \begin{lemma} \cite[Lemma 4.3]{BL92} \label{l:per_vektor_kihagyhato}
   Let $p$ be a period vector of a digraph $G$, and suppose that $\alpha=(v_1, v_2, \dots, v_s)$ is a legal sequence of firings on $G$ from some initial chip configuration. Let $\alpha'$ be the sequence obtained from $\alpha$ by deleting the first $p(v)$ occurrence of each vertex $v$ (if $v$ occurs less than $p(v)$ times in $\alpha$, then we delete all of its occurrences). Then $\alpha'$ is also a legal sequence of firings from the same initial configuration.
 \end{lemma}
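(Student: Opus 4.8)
The plan is to verify legality of $\alpha'$ by walking through it one firing at a time and comparing, at the instant a vertex is fired in $\alpha'$, its current chip count with the chip count that vertex had at the corresponding moment along $\alpha$, where legality is given. Fix the initial configuration $x_0$, and for $0 \le i \le s$ let $x_i = x_0 + L_G\sum_{j=1}^i \mathbf{1}_{v_j}$ be the configuration after the first $i$ firings of $\alpha$; legality of $\alpha$ says $x_{i-1}(v_i) \ge \deg^+(v_i)$ for each $i$. For a vertex $w$ let $m_w$ be the number of occurrences of $w$ in $\alpha$, put $k_w = \min(p(w), m_w)$ (the number of occurrences of $w$ that get deleted), and let $a_i(w)$ be the number of $j \le i$ with $v_j = w$. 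Position $i$ survives in $\alpha'$ exactly when $v_i$ is the $r$-th occurrence of $w := v_i$ for some $r > k_w$; for such a surviving position, the number of surviving occurrences of any vertex $u$ among the first $i-1$ positions is $\max(a_{i-1}(u) - k_u, 0)$, so the configuration reached in $\alpha'$ just before this firing is $z := x_0 + L_G\sum_u \max(a_{i-1}(u)-k_u,0)\mathbf{1}_u$, and the whole statement reduces to showing $z(v_i) \ge \deg^+(v_i)$ for every surviving $i$.

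The next step records two facts about a surviving position $i$ with $v := v_i$. Since $i$ is the $r$-th occurrence of $v$ with $k_v < r \le m_v$, we have $k_v < m_v$, which forces $k_v = p(v)$; and $v$ has occurred $r-1 \ge k_v$ times among the first $i-1$ positions, so $a_{i-1}(v) \ge p(v)$ and $\max(a_{i-1}(v)-k_v,0) = a_{i-1}(v)-p(v)$. Subtracting the formulas for $z(v)$ and $x_{i-1}(v)$ and using $(L_G)_{vv} = -\deg^+(v)$ and $(L_G)_{vw} = d(w,v)$ for $w \ne v$,
\[
z(v) - x_{i-1}(v) \;=\; p(v)\deg^+(v) \;-\; \sum_{w \ne v} \min\!\big(a_{i-1}(w),k_w\big)\,d(w,v).
\]
Bounding $\min(a_{i-1}(w),k_w) \le k_w \le p(w)$ (legitimate since $d(w,v) \ge 0$) yields
\[
z(v) - x_{i-1}(v) \;\ge\; p(v)\deg^+(v) \;-\; \sum_{w \ne v} p(w)\,d(w,v) \;=\; -\,(L_G p)(v) \;=\; 0,
\]
the last equality being the hypothesis $L_G p = 0$ read at coordinate $v$. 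Hence $z(v) \ge x_{i-1}(v) \ge \deg^+(v)$, the prescribed firing is legal, and since this holds for every surviving position in the order it appears, $\alpha'$ is a legal firing sequence from $x_0$.

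The delicate point — and the reason one cannot delete the occurrences of a single vertex at a time but must remove a full $p$-worth of firings of every vertex simultaneously — is the cancellation in the two displays: deleting firings of a neighbour $w$ of $v$ genuinely lowers $v$'s chip count, and this loss is recouped only because exactly $p(v)$ firings of $v$ itself are omitted on a surviving step, so the estimate collapses to $(L_G p)(v) = 0$. Both ingredients are essential: the omitted count at $v$ equals $p(v)$ on surviving steps, while the omitted counts at the other vertices never exceed $p(w)$. Beyond establishing this identity and these two inequalities, the argument is routine bookkeeping.
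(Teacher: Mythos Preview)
Your proof is correct and follows essentially the same argument as the paper (which cites \cite{BL92} for this lemma and reproduces the analogous proof for the rotor-routing version in Lemma~\ref{l:rr_per_vektor_kihagyhato}): at each surviving position, the vertex $v$ being fired has been fired exactly $p(v)$ fewer times than in $\alpha$, while each in-neighbour $w$ has been fired at most $p(w)$ fewer times, so by $L_Gp=0$ the chip count at $v$ is at least what it was in $\alpha$. Your version just writes out the comparison via explicit Laplacian formulas rather than the paper's prose accounting, but the content is identical.
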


For a given vector $b \in \mathbb{Z}^{V(G)}_{\geq 0}$, let us call the following
game \emph{$b$-bounded chip-firing game}: We are only allowed to 
make legal firings, and each vertex $v$ can be fired at most $b(v)$ times during the whole game. The $b$-bounded game also has an ``abelian'' property. 

\begin{lemma} \cite[Lemma 1.4]{BL92} \label{l:korlatos_jatek_moho}
For a given bound $b \in \mathbb{Z}^{V(G)}_{\geq 0}$ and initial chip configuration $x$, each maximal 
$b$-bounded chip-firing game with initial chip configuration $x$ has the same firing vector.
\end{lemma}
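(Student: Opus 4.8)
The plan is to establish the following comparison statement, from which the lemma follows immediately by symmetry: if $\alpha$ is any legal $b$-bounded game from $x$ with firing vector $f_\alpha$, and $\beta$ is a \emph{maximal} legal $b$-bounded game from $x$ with firing vector $f_\beta$, then $f_\alpha \le f_\beta$ coordinatewise. Applying this with the roles of two maximal games interchanged then yields $f_\alpha \le f_\beta$ and $f_\beta \le f_\alpha$, hence $f_\alpha = f_\beta$. (Maximal games exist and are finite, since any $b$-bounded game has length at most $\sum_{v} b(v)$.)

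The proof rests on two monotonicity observations. First, the configuration reached from $x$ by a game with firing vector $f$ is exactly $x + L_G f$, so it depends only on $f$ and not on the order of firings; in particular the number of chips on a vertex $v$ after such a game equals $x(v) - \deg^+(v) f(v) + \sum_{u \ne v} d(u,v) f(u)$. Second, since $d(u,v) \ge 0$, increasing the firing counts of vertices other than $v$ can only increase the number of chips on $v$, while leaving the remaining budget $b(v) - f(v)$ of $v$ untouched.

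I would prove the comparison statement by induction on the length $s$ of $\alpha$. The case $s = 0$ is trivial. For $s > 0$, write $\alpha = (v_1, \dots, v_s)$ and let $g = f_\alpha - \mathbf{1}_{v_s}$ be the firing vector of the prefix $(v_1, \dots, v_{s-1})$; by the inductive hypothesis $g \le f_\beta$. It remains to show $f_\beta(v_s) \ge g(v_s) + 1$. Suppose not; together with $g \le f_\beta$ this forces $f_\beta(v_s) = g(v_s)$. Since $v_s$ is legally fired right after the prefix, the number of chips on $v_s$ in the configuration $x + L_G g$ is at least $\deg^+(v_s)$. Using $f_\beta(v_s) = g(v_s)$ together with $f_\beta(u) \ge g(u)$ and $d(u,v_s) \ge 0$ for $u \ne v_s$, the monotonicity computation shows that the number of chips on $v_s$ in $x + L_G f_\beta$ is at least the number in $x + L_G g$, hence at least $\deg^+(v_s)$, so firing $v_s$ is legal after $\beta$. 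Moreover $f_\beta(v_s) = g(v_s) < b(v_s)$, because the prefix followed by one more firing of $v_s$ is $b$-bounded; thus $v_s$ still has budget remaining after $\beta$. Consequently $v_s$ could be legally fired after $\beta$ within the bound, contradicting the maximality of $\beta$.

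The main obstacle is precisely this last contradiction: one must verify simultaneously that the budget of $v_s$ is not yet exhausted in $\beta$ and that $v_s$ remains legally fireable after $\beta$. The budget part is immediate from $f_\beta(v_s) = g(v_s) < b(v_s)$; the legality part is where the sign condition $d(u,v_s) \ge 0$ is essential, as it guarantees that firing the \emph{other} vertices the extra number of times recorded in $f_\beta - g$ can only add chips to $v_s$. Everything else is bookkeeping with the identity $x + L_G f$ for the configuration reached by a game with firing vector $f$.
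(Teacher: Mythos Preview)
Your proof is correct. Note, however, that the paper does not actually prove this lemma: it is quoted verbatim from \cite[Lemma~1.4]{BL92} without argument, so there is no in-paper proof to compare against directly.

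That said, the paper \emph{does} prove the rotor-routing analogue (Lemma~\ref{l:bounded_rr_abelian}), and it is instructive to compare your argument with that one. The paper uses a Thorup-style ``first exceedance'' argument: assume two maximal runs have odometers $o_1\neq o_2$, run the second game and stop at the first moment some vertex $v$ is about to be processed for the $(o_1(v)+1)$-th time, and derive a contradiction to the maximality of the first run. Your argument packages the same monotonicity principle differently: you fix one maximal run $\beta$ and show by induction on length that \emph{every} $b$-bounded legal game $\alpha$ satisfies $f_\alpha\le f_\beta$, then conclude by symmetry. Both rest on the same two facts you isolate --- that the reached configuration depends only on the firing vector, and that increasing firing counts at vertices other than $v$ can only increase the chip count at $v$ --- so they are really two presentations of one idea. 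Your inductive formulation has the minor bonus of proving the slightly stronger comparison statement (any run is dominated by any maximal run), not just equality of maximal runs.
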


\begin{proof}[Proof of Claim \ref{cl:reachability_harder_than_recurrence}]
Since our digraph $G$ is strongly connected, the primitive period vector $p_G$ is unique. Hence by Lemma \ref{l:per_vektor_kihagyhato}, $x$ is recurrent if and only if there is a legal game with firing vector $p_G$. By Lemma \ref{l:korlatos_jatek_moho}, this is equivalent to the fact that the maximal $p_G$-bounded game started from $x$ has firing vector $p_G$.

Check if there is any vertex $v$ with $x(v)\geq \deg^+(v)$. If not, then $x$ is stable, hence not recurrent. If yes, then choose such a vertex $v$ and fire it. We show that $x$ is recurrent if and only if $x+L_G \mathbf{1}_v \leadsto x$.

Again by Lemmas \ref{l:per_vektor_kihagyhato} and \ref{l:korlatos_jatek_moho}, $x+L_G\mathbf{1}_v \leadsto x$ is equivalent to the fact that the firing vector of the $(p_G-\mathbf{1}_v)$-bounded game from initial configuration $x+L_G\mathbf{1}_v$ has firing vector $p_G-\mathbf{1}_v$. The claim now follows from the abelian property of the $p_G$-bounded game.
\end{proof}

We prove that if deciding whether a chip configuration on a strongly connected digraph is recurrent were in $\P$ then the polynomial hierarchy would collapse to $\NP$. By Claim \ref{cl:reachability_harder_than_recurrence}, this implies Theorem \ref{thm:reachability_hard}.

To prove our statement about the decision of recurrence, we need to examine the chip-firing halting problem. 
The chip-firing halting problem asks whether for a
given digraph $G$ and chip configuration $x$, the game with initial configuration $x$ on the digraph $G$ terminates after finitely many steps. By Theorem \ref{t:chip-firing_commutative}, this indeed depends only on $x$ and $G$. Let us call a chip configuration $x$ on a digraph $G$ \emph{halting}, if the chip-firing game started from $x$ terminates after finitely many steps, and call it non-halting otherwise.
The halting problem is known to be hard:
\begin{thm}\cite[Corollary 3.2]{coeulerian}\label{thm:halting_NP-complete}
The chip-firing halting problem is $\NP$-complete for strongly connected digraphs.
\end{thm}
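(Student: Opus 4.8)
The plan is to establish the two halves of $\NP$-completeness separately; membership in $\NP$ is a fairly direct consequence of the abelian structure, while the $\NP$-hardness is where the genuine construction is needed.

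For membership, I would first observe, using Theorem~\ref{t:chip-firing_commutative} together with Lemmas~\ref{l:per_vektor_kihagyhato} and~\ref{l:korlatos_jatek_moho}, that on a strongly connected $G$ the game from $x$ halts if and only if some legal game reaches a stable configuration (one in which no vertex is fireable), equivalently iff $x$ is not recurrent. A halting instance is therefore witnessed by the firing vector $f\in\mathbb{Z}^{V(G)}_{\geq 0}$ of such a game. Although the total number of firings $\sum_v f(v)$ may be exponential (since $\per(G)$ can be), the individual entries of $f$ are bounded by an explicit exponential in the input size by the standard estimates on terminating games \cite{BL92}, so $f$ has only polynomially many bits and is a legitimate certificate. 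Verification splits into checking (i) that $x+L_Gf$ is stable, which is a single matrix--vector evaluation, and (ii) that $f$ is realizable by a legal firing sequence. For (ii) I would avoid firing one chip at a time and instead fast-forward: repeatedly fire the entire currently fireable set $S$ as a block, advancing it by the largest number $c$ of synchronous rounds for which $S$ stays fixed, no new vertex becomes fireable, and no vertex exhausts its remaining budget; since along such a block the configuration evolves as $y+kL_G\mathbf{1}_S$, the admissible $c$ is the minimum of finitely many linear thresholds and is computable in polynomial time. The delicate point, which must be argued from the lattice structure of legal games, is that only polynomially many such blocks occur before $f$ is used up; this is the main obstacle on the membership side.

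For $\NP$-hardness I would exploit exactly the feature that makes the problem interesting: a strongly connected digraph encoded by its adjacency matrix (with succinctly recorded parallel edges) can have $\per(G)$ exponential in the input size, so a poly-size instance can support an exponentially long, yet fully determined, chip-firing evolution. I would use this to simulate a deterministic exhaustive search for a fixed $\NP$-complete problem, say whether some $0/1$ witness of length $n$ satisfies a given poly-time predicate. Using a doubling/binary-counter gadget, whose primitive period vector $p_G$ has entries that are powers of two so that the firing dynamics implement binary increments and carries, the configuration would step through the $2^n$ candidate witnesses, and I would arrange the chips so that the game can legally reach a stable configuration (halt) precisely when the search encounters a satisfying witness, while in the absence of any witness the counter wraps around forever and the game is non-halting. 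Because the simulated search is deterministic, the abelian property of Theorem~\ref{t:chip-firing_commutative} poses no difficulty, and the reduction itself runs in polynomial time even though the simulated computation does not.

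The main obstacle is the faithful design and verification of this gadget: one must build a strongly connected, succinctly encodable digraph whose legal firings step a binary counter and test the predicate without spurious moves, and prove that ``halts'' corresponds to the \emph{yes} answer with the correct polarity (so that $\NP$-hardness results, rather than $\coNP$-hardness). Controlling legality is the crux: one must ensure that the only legal evolution is the intended simulation, and that no illegitimate firing order can either stall a would-be perpetual computation or prematurely unblock a halting one. This is handled by the careful choice of out-degrees and edge multiplicities in the gadget. Combining this reduction with the certificate above yields that the chip-firing halting problem is $\NP$-complete on strongly connected digraphs.
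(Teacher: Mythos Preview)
The paper does not prove this theorem at all; it is quoted from \cite{coeulerian} and used as a black box. So there is no ``paper's own proof'' to compare against, and your write-up is really an attempt to reconstruct the Farrell--Levine argument.

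On the $\NP$ side, your plan is more fragile than it needs to be. You propose the firing vector $f$ as a certificate and then try to verify legality by a fast-forward simulation, explicitly flagging that you do not know how to bound the number of blocks. But a much cleaner certificate is available using only tools already in this paper: a \emph{stable} configuration $y$ with $x\sim y$. If the game from $x$ halts, it ends in such a $y$; conversely, if a stable $y$ with $x\sim y$ exists, then $y$ is trivially halting and Lemma~\ref{lemma::termination_equivalence} gives that $x$ is halting. Stability of $y$ is checked coordinatewise, and $x\sim y$ is decidable in polynomial time by Proposition~\ref{prop:lin_ekv_in_P}. This removes the ``main obstacle'' you identified entirely.

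On the hardness side, your instinct to exploit exponential period length is correct, but the plan to simulate an exhaustive $2^n$-step search with a binary-counter gadget is both heavier and riskier than what Farrell and Levine actually do. Their reduction is from a concrete $\NP$-complete arithmetic problem (representability of an integer as a nonnegative integer combination of given integers), and the digraph they build is small and explicit, with the edge multiplicities directly encoding the input numbers; the analysis of legal firings reduces to that arithmetic condition rather than to the correctness of a general-purpose simulator. Your approach could in principle be made to work, but the step you label ``the main obstacle'' --- ensuring that the only legal evolution is the intended deterministic computation, with the right halting polarity --- is essentially the entire proof, and nothing in the sketch indicates how to discharge it.
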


We show the following.

\begin{prop}\label{prop:coNP}
If there were a polynomial algorithm deciding whether a chip configuration on a strongly connected digraph is recurrent, then the chip-firing halting problem would be in $\coNP$ for strongly connected digraphs.
\end{prop}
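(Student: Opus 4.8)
The plan is to put the \emph{complement} of the halting problem into \NP, using the hypothesised polynomial-time recurrence test as a subroutine (a polynomial subroutine is free inside an \NP\ computation); this is precisely the statement that the halting problem is in \coNP.

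I would first establish the following backbone: on a strongly connected $G$, a configuration $x$ is non-halting if and only if some recurrent configuration is reachable from $x$ (equivalently, if and only if there is a legal game from $x$ whose firing vector $f$ satisfies $f\ge p_G$). The ``if'' direction is easy: a recurrent $y$ admits a nonempty legal game back to itself, which can be iterated forever after a legal game $x\to y$, so by Theorem~\ref{t:chip-firing_commutative} $x$ is non-halting. For ``only if'', run the maximal legal game from $x$ that always fires a fireable vertex that has so far been fired the fewest times; being non-halting this game runs forever, and strong connectivity forces every vertex to be fired infinitely often (if a vertex fires infinitely often then each out-neighbour receives chips infinitely often and hence, under the priority rule, is itself fired infinitely often, and this spreads to all of $V(G)$). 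After finitely many steps every vertex has fired at least once, so from then on all configurations are nonnegative; as the total number of chips is conserved this leaves only finitely many possibilities, so some configuration $y$ recurs — the segment between two of its occurrences is a nonempty legal game $y\to y$, the prefix up to the first occurrence witnesses $x\leadsto y$, and the firing vector reaching $y$ from $x$ eventually dominates $p_G$.

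Next I would assemble the certificate. If $x$ is non-halting, choose $y$ as above: it is nonnegative with $\sum_v y(v)=\sum_v x(v)$, hence has polynomially many bits. By Proposition~\ref{prop::period} the solutions of $L_G f=y-x$ form a coset $f_0+\mathbf{Z}\,p_G$, where $f_0$ of polynomial bit-size is available from \cite{groetschel-lovasz-shrijver} and $p_G$ is computable in polynomial time by Proposition~\ref{p:period_szamolhato}, so there is a \emph{minimal} nonnegative solution $f$ of polynomial bit-size; by Lemma~\ref{l:per_vektor_kihagyhato} this $f$ is legally orderable from $x$ as soon as \emph{any} nonnegative firing vector reaching $y$ is, so $x\leadsto y$ is equivalent to ``$f$ is legally orderable from $x$''. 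The certificate is the pair $(y,f)$ (possibly with extra bookkeeping), and verification splits into: (i) $f\ge 0$ and $L_G f=y-x$, plain linear algebra; (ii) $y$ is recurrent, decided by one call to the hypothesised algorithm; (iii) $f$ is legally orderable from $x$, i.e.\ $x\leadsto y$. By the backbone, (ii) and (iii) together certify that $x$ is non-halting.

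Part (iii) is the main obstacle. A legal game realising $f$ can consist of exponentially many firings, so it cannot be recorded in the certificate or replayed step by step, and the obvious ``batched'' simulation of the $f$-bounded game of Lemma~\ref{l:korlatos_jatek_moho} (repeatedly firing a fireable vertex as many times at once as its remaining budget and current chip count allow) is not known to use only polynomially many batches. Moreover it cannot be: were (iii) in \P, then, via the ``$f\ge p_G$, $f$ legally orderable'' form of the backbone, the non-halting problem would lie in \NP\ with \emph{no} hypothesis at all, so by the \NP-completeness of halting (Theorem~\ref{thm:halting_NP-complete}) the polynomial hierarchy would collapse unconditionally. Hence the recurrence subroutine must also drive (iii): the plan is to let the certificate specify a decomposition of the game $x\leadsto y$ into polynomially many stages, each described succinctly — typically by a firing vector that is a small perturbation of an integer multiple of $p_G$ — and each running between configurations whose recurrence can be checked by the subroutine, so that the legality of every stage, and therefore of the whole game, becomes polynomially verifiable. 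Making this decomposition explicit and bounding the number of stages is where the substance of the proof lies.
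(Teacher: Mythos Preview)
Your plan has a genuine gap at part~(iii). You correctly identify that verifying $x\leadsto y$ is the obstacle, and you even observe that if (iii) were in \P\ unconditionally then the whole Proposition would follow with no hypothesis at all. But your proposed fix --- decomposing the game into polynomially many stages whose legality is somehow certified via calls to the recurrence oracle --- is left entirely unspecified, and it is not clear how a recurrence test on intermediate configurations could witness the legal orderability of a firing vector. As written, the proof is incomplete precisely at the point you yourself flag as ``where the substance of the proof lies''.

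The paper's proof bypasses (iii) altogether. The ingredient you are missing is Lemma~\ref{lemma::termination_equivalence}: halting is invariant under \emph{linear equivalence}, not merely under reachability. Hence the certificate for non-halting of $x$ is simply a recurrent $y$ with $x\sim y$. Existence of such a $y$ when $x$ is non-halting follows from your ``only if'' argument (a reached recurrent configuration is in particular linearly equivalent). Conversely, if $y$ is recurrent then $y$ is non-halting, and $x\sim y$ alone forces $x$ to be non-halting by the lemma --- there is no need to exhibit or verify a legal game from $x$ to $y$. Verification then reduces to (a)~$x\sim y$, which is in \P\ by Proposition~\ref{prop:lin_ekv_in_P}, and (b)~$y$ recurrent, which is in \P\ by hypothesis. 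Your own diagnosis that (iii) cannot lie in \P\ without collapsing the hierarchy was the hint that (iii) should be sidestepped rather than attacked head-on.
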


Before proving this statement, let us point out why it implies Theorem \ref{thm:reachability_hard}.

\begin{proof}[Proof of Theorem \ref{thm:reachability_hard}]
By Claim \ref{cl:reachability_harder_than_recurrence} and Proposition \ref{prop:coNP}, the existence of a polynomial algorithm for the reachability problem on strongly connected digraphs would imply that the chip-firing halting problem would be in $\coNP$. By Theorem \ref{thm:halting_NP-complete}, this means that
an $\NP$-complete problem were in $\coNP$. This would imply $\NP =\coNP$ which in turn implies that the polynomial hierarchy collapses to $\NP$.
\end{proof}

For proving Proposition \ref{prop:coNP}, we need a definition and a lemma.

\begin{defn}[Linear equivalence \cite{BN-Riem-Roch}]
Let $G$ be a strongly connected digraph.
For $x, y \in \mathbb{Z}^{V(G)}$, let $x\sim y$ if there exists $z\in \mathbb{Z}_{\geq 0}^{V(G)}$ such that $y = x + L_G z$.  In this case we say that $x$ and $y$ are linearly equivalent.
\end{defn}
One can easily check that for a strongly connected digraph, linear equivalence is indeed an equivalence relation on $\mathbb{Z}^{V(G)}$. The only nontrivial property is symmetry, which holds because the primitive period vector has strictly positive entries for a strongly connected digraph.

\begin{lemma}\label{lemma::termination_equivalence} \cite[Lemma 2.1]{coeulerian}
Let $G$ be a strongly connected digraph and let $x$ and $y$ be chip configurations on $G$. If $x \sim y$, then $x$ is terminating if and only if $y$ is terminating. 
\end{lemma}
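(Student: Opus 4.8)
The plan is to prove a one-directional, one-firing version of the statement and then bootstrap to the full lemma by induction and symmetry. Call a configuration \emph{non-terminating} if it is not terminating; by Theorem \ref{t:chip-firing_commutative} this is the same as admitting an infinite legal game. Since linear equivalence is an equivalence relation on $\mathbb{Z}^{V(G)}$ (in particular symmetric, as noted just above the statement), it suffices to prove the implication: if $b = a + L_G z$ for some $z \in \mathbb{Z}^{V(G)}_{\geq 0}$ and $a$ is non-terminating, then $b$ is non-terminating. Applying this to both ordered pairs $(x,y)$ and $(y,x)$ and taking contrapositives then gives that $x$ is terminating if and only if $y$ is terminating.

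First I would reduce to the case $z = \mathbf{1}_u$. Writing $z = \mathbf{1}_{u_1} + \cdots + \mathbf{1}_{u_k}$ with $k = \sum_v z(v)$ and setting $c_j = a + L_G(\mathbf{1}_{u_1} + \cdots + \mathbf{1}_{u_j})$, it is enough to establish the single-firing step: if $c$ is non-terminating, then $c + L_G\mathbf{1}_u$ is non-terminating for every $u \in V(G)$. Chaining this step along $c_0 = a, c_1, \dots, c_k = b$ yields the implication above.

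The single-firing step is the heart of the matter, and the obstacle is that $c + L_G\mathbf{1}_u$ may have \emph{fewer} chips than $c$ at $u$ (exactly when firing $u$ is illegal at $c$, i.e.\ $c(u) < \deg^+(u)$); so one can neither replay a legal game from $c$ verbatim nor invoke monotonicity of non-termination under adding chips. The key observation is that off $u$ the shift only adds chips: $(L_G\mathbf{1}_u)(w) = d(u,w) \ge 0$ for $w \ne u$. I would use this as follows. Fix an infinite legal game from $c$. Because $G$ is strongly connected, every vertex is fired infinitely often in it, so $u$ is fired; let $\tau$ be the finite, $u$-free legal prefix up to the first firing of $u$, with firing vector $g$. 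Firing $\tau$ and then $u$ reaches the configuration $D = c + L_G(g + \mathbf{1}_u)$, which is non-terminating because it lies on an infinite legal game. Now replay $\tau$ from $c + L_G\mathbf{1}_u$: the two runs differ at every step by the constant shift $L_G\mathbf{1}_u$, and since $\tau$ never fires $u$, each fired vertex $w \ne u$ has at least as many chips as in the $c$-run, so $\tau$ remains legal and lands in $c + L_G(\mathbf{1}_u + g) = D$. A legal path from $c + L_G\mathbf{1}_u$ to the non-terminating configuration $D$ yields an infinite legal game from $c + L_G\mathbf{1}_u$, which is therefore non-terminating.

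It remains to justify the fact used above that on a strongly connected digraph every vertex is fired infinitely often in any infinite legal game; I expect this, together with the check that $\tau$ stays legal after the shift, to be the only delicate points. I would prove it from chip conservation: firing conserves the total number of chips, since firing $v$ removes $\deg^+(v)$ chips and distributes $\sum_w d(v,w) = \deg^+(v)$ chips. If some vertex were fired only finitely often while another is fired infinitely often, strong connectivity provides an edge from the infinitely-fired part to the finitely-fired part, forcing the chip count of a finitely-fired vertex to tend to $+\infty$; conservation then forces some vertex's count to $-\infty$, contradicting that in a legal game every vertex's count stays bounded below (it is $\ge 0$ right after each of its firings and only increases in between). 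Hence the finitely-fired set is empty, completing the argument.
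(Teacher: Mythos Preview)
Your proof is correct. The paper does not give its own argument for this lemma; it is quoted verbatim from \cite{coeulerian}, so there is no in-paper proof to compare against. Your approach---reduce by symmetry of $\sim$ to showing non-termination is preserved under a single (possibly illegal) firing, then exploit that an infinite legal game on a strongly connected digraph must fire every vertex infinitely often to find a $u$-free legal prefix $\tau$ leading to a common non-terminating configuration $D$---is a standard and clean way to establish the result. The two points you flagged as delicate are both handled correctly: the legality of $\tau$ from $c + L_G\mathbf{1}_u$ follows since the constant shift $L_G\mathbf{1}_u$ is nonnegative off $u$ and $\tau$ never fires $u$; and the ``every vertex fires infinitely often'' claim follows from chip conservation together with the lower bound $\min\{c(v),0\}$ on each vertex's chip count in a legal game (exactly the bound invoked in the proof of Proposition~\ref{prop:coNP}).
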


\begin{prop}\label{prop:lin_ekv_in_P}\cite[Proposition 8]{chip-reach}
There is a polynomial algorithm that for a given digraph $G$ and chip configurations $x$ and $y$ decides whether there exists an $f\in \mathbb{Z}^{V(G)}_{\geq 0}$ such that $y = x + L_G f$, and if such a vector exists, it computes a reduced such firing vector. 
Specifically, for strongly connected digraphs, linear equivalence is decidable in polynomial time.
\end{prop}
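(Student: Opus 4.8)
The plan is to recast the problem as an integer-feasibility question for the linear system $L_G f = b$, where $b := y - x$, and then to read off its nonnegative integer solutions from the explicit description of $\ker L_G$ supplied by Proposition~\ref{prop::period}. Throughout I would keep track of bit-sizes to ensure that every quantity stays polynomially bounded in the input size, even though some numerical values (notably the entries of the period vectors) may be exponentially large.

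First I would compute $b = y - x$ and invoke a polynomial-time solver for integer linear systems (e.g.\ \cite[Theorem 1.4.21]{groetschel-lovasz-shrijver}, via Hermite normal form) to decide whether $L_G f = b$ has an integer solution $f_0 \in \mathbb{Z}^{V(G)}$, and to produce one if it does. If no integer solution exists, then a fortiori no nonnegative one does, and we report this. Otherwise, by Proposition~\ref{prop::period} the integer kernel of $L_G$ is exactly the $\mathbb{Z}$-span of the primitive period vectors $p_1,\dots,p_k$ of the sink components $G_1,\dots,G_k$, each supported on its own disjoint vertex set. Hence the complete set of integer solutions is $\{\, f_0 + \sum_{i=1}^k \lambda_i p_i : \lambda_i \in \mathbb{Z} \,\}$.

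The key structural observation is that the search for a nonnegative solution now decouples coordinatewise. For a vertex $v$ lying in no sink component, every $p_i$ vanishes at $v$, so $f(v) = f_0(v)$ is forced for all integer solutions, and this coordinate can be nonnegative only if $f_0(v) \ge 0$ already. For a vertex $v \in V(G_i)$, by contrast, $p_i(v) > 0$ (the primitive period vector of a strongly connected digraph is strictly positive), so $f(v) = f_0(v) + \lambda_i p_i(v)$ can be made nonnegative by taking $\lambda_i$ large, independently of the other components. I would therefore check whether $f_0(v) \ge 0$ at every non-sink vertex: if some such coordinate is negative, no nonnegative solution exists; otherwise one is guaranteed, and the \emph{reduced} firing vector (the one from which no full $p_i$ can be subtracted while staying nonnegative) is obtained by choosing, for each $i$, the smallest integer $\lambda_i$ with $f_0 + \lambda_i p_i \ge 0$ on $V(G_i)$, namely $\lambda_i = \max_{v \in V(G_i)} \lceil -f_0(v)/p_i(v) \rceil$. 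These $\lambda_i$ are computable from the period vectors (available in polynomial time by Proposition~\ref{p:period_szamolhato}) by integer division and rounding.

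The step requiring the most care is the bit-complexity bookkeeping: I must verify that the solution $f_0$ returned by the Diophantine solver, the period vectors $p_i$, and the resulting $\lambda_i$ and $f$ all admit encodings of size polynomial in the input, so that the procedure and its output firing vector are polynomially bounded. This is precisely where the succinct size measure forces reliance on Propositions~\ref{prop::period} and~\ref{p:period_szamolhato} rather than on writing vectors in unary. The strongly connected case then falls out as a clean specialization: here $G$ is its own unique sink component, there are no non-sink vertices, the positivity condition is vacuous, and so a nonnegative integer $f$ exists if and only if an integer $f$ exists. Thus $x \sim y$ reduces to integer feasibility of $L_G f = y - x$, which is decidable in polynomial time, yielding the claimed polynomial-time decidability of linear equivalence.
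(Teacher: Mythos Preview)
The paper does not actually prove Proposition~\ref{prop:lin_ekv_in_P}; it is quoted as a known result from \cite[Proposition 8]{chip-reach}, with no argument supplied here. So there is no ``paper's own proof'' to compare against.

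That said, your proposal is a correct and self-contained proof. The reduction to integer feasibility of $L_G f = y-x$ via Hermite normal form, combined with the description of $\ker L_G$ in Proposition~\ref{prop::period}, is exactly the right structure: the kernel is supported on the sink components, so the coordinates outside them are rigid and the coordinates inside can be shifted independently by integer multiples of the $p_i$. Your formula $\lambda_i = \max_{v\in V(G_i)} \lceil -f_0(v)/p_i(v)\rceil$ correctly picks out the unique reduced representative, and your remarks on bit-size (HNF solutions, Proposition~\ref{p:period_szamolhato} for the $p_i$, and integer arithmetic for the $\lambda_i$) address the only nontrivial complexity issue. One small point worth making explicit: after fixing $\lambda_i$ this way, the argmax vertex $v^*$ satisfies $f(v^*) < p_i(v^*)$, which certifies that the resulting $f$ is indeed reduced in the sense used later in the paper (no nonzero period vector can be subtracted while staying nonnegative).
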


\begin{proof}[Proof of Proposition \ref{prop:coNP}]
Our certificate for the non-halting property of the game with initial configuration $x$ is a recurrent configuration $y$ linearly equivalent to $x$.

We claim that if the game with initial configuration $x$ is non-halting then there exist such a $y$. Indeed, play a legal game starting from $x$. As a vertex can only lose chips when it is fired, and in such a case it cannot go into negative, during the legal game, the number of chips on any vertex $v$ is at least $\min\{x(v),0\}$ at any time.
As the number of chips stays constant, there are only finitely many possible configurations we can see. As we can play indefinitely, we will eventually see a configuration $y$ for the second time. This means we returned to this configuration by a legal game, hence $y$ is recurrent. As we also had $x\leadsto y$, in particular we had $x\sim y$.

Also, the existence of a recurrent $y$ such that $x\sim y$ implies that $x$ is non-halting. Indeed, $y$ is non-halting since we can repeat the legal game transforming $y$ to itself indefinitely. Now Lemma \ref{lemma::termination_equivalence} implies that $x$ is also non-halting.

If recurrence were checkable in polynomial time, then this proof was also checkable in polynomial time, since $x\sim y$ can be checked in polynomial time by Proposition \ref{prop:lin_ekv_in_P}.
\end{proof}

\section{Rotor-routing}

In this section, we show that the rotor-routing reachability problem can be decided in polynomial time.

The rotor-routing game is played on a ribbon digraph.
A \emph{ribbon digraph} is a digraph together with a fixed cyclic ordering of the outgoing edges from $v$ for each vertex $v$.
For an edge $e$ with tail $t$, denote by $e^+$ the outgoing edge following $e$ in the cyclic order at $t$. From this point, we always assume that our digraphs have a ribbon digraph structure. 

Let $G$ be a ribbon digraph.
A \emph{rotor configuration} on $G$ is a function $\varrho$ that assigns to each non-sink vertex $v$ an edge with tail $v$. We call $\varrho(v)$ the \emph{rotor} at $v$.
For a rotor configuration $\varrho$, we call the subgraph with edge set $\{\varrho(v): v\in V(G)\}$ the \emph{rotor subgraph}. See Figure \ref{fig:rr}, where the rotor-edges are shown with bold. We emphasize that we need not have any sink in the graph.

A configuration of the rotor-routing game is a pair $(x,\varrho)$, where $x$ is a chip configuration, and $\varrho$ is a rotor configuration on $G$. We call such pairs \emph{chip-and-rotor configuration}.

Given a chip-and-rotor configuration $(x,\varrho)$, a \emph{routing} at a non-sink vertex $v$ results in the configuration $(x', \varrho')$, where
$\varrho'$ is the rotor configuration with
$$
\varrho'(u) = \left\{\begin{array}{cl} \varrho(u) & \text{if $u\neq v$,}  \\
         \varrho(u)^+ & \text{if $u=v$},
      \end{array} \right.
$$
and $x'=x-\mathbf{1}_v+\mathbf{1}_{v'}$ where $v'$ is the head of $\varrho^+(v)$. See Figure \ref{fig:rr} for an example. Routing at a sink vertex has no effect.

We call the routing at $v$ \emph{legal} (with respect to the configuration $(x,\varrho)$), if $x(v)>0$, i.e.~the routing at $v$ does not create a negative entry at $v$. Note that other vertices might have a negative number of chips. A \emph{legal game} is a sequence of configurations such that each configuration is obtained from the previous one by a legal routing. For a legal game, we call the vector $o\in\mathbb{Z}_{\geq 0}^{V(G)}$ where for each $v\in V(G)$, $o(v)$ is the number of times $v$ has been routed in the game, the \emph{odometer} of the game.

We say that a chip-and-rotor configuration $(x_2,\varrho_2)$ is reachable from a chip-and-rotor configuration $(x_1,\varrho_1)$ if there is a legal game starting in $(x_1,\varrho_1)$ and ending in $(x_2,\varrho_2)$. We denote this by $(x_1,\varrho_1)\leadsto (x_2,\varrho_2)$. The rotor-routing reachability problem asks whether for two given chip-and-rotor configurations $(x_1,\varrho_1)$ and $(x_2,\varrho_2)$ on a digraph $G$, we have $(x_1,\varrho_1)\leadsto (x_2,\varrho_2)$.

As we will be interested in computational questions, let us discuss how we will encode ribbon structures. (Recall that we encode digraphs by their adjacency matrix.) One way to encode the ribbon structure is to simply list the edges around each vertex as in the cyclic order. This would give an encoding whose size is $O(|E(G)|)$. However, if there are many consecutive parallel edges in a cyclic order, 
then we can shorten the description by only writing down how many consecutive instances of the parallel edge follow at this point.
We will use this encoding for ribbon structures.
This way, if say, parallel edges are all consecutive in a ribbon structure then our description has only $O(|V(G)|+\log|E(G)|)$ size and $|E(G)|$ might be exponentially large compared to this.

We prove that rotor-routing reachability is decidable in polynomial time even for this succinct encoding:

\begin{thm}
The rotor-routing reachability problem can be decided in polynomial time, even for multigraphs.
\end{thm}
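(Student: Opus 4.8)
The plan is to reduce reachability to a handful of queries of the type handled by Proposition~\ref{prop:lin_ekv_in_P}, together with modular arithmetic on the ribbon structure. The starting point is that a legal game is completely determined by its odometer $o\in\mathbb{Z}_{\geq 0}^{V(G)}$: after such a game the rotor at a non-sink $v$ is $\varrho_1(v)$ advanced $o(v)$ steps in the cyclic order at $v$, and the chip configuration is $x_1+N(o)$, where, writing $o(v)=\deg^+(v)\,k(v)+r(v)$ with $0\le r(v)<\deg^+(v)$, one has $N(o)=L_G k+\delta$ with $\delta(u)=\bigl(\sum_{v}c_{vu}\bigr)-r(u)$ and $c_{vu}$ the number of edges $v\to u$ among the first $r(v)$ edges after $\varrho_1(v)$. (Each full block of $\deg^+(v)$ routings at $v$ is one chip-firing of $v$, contributing the $v$-column of $L_G$; the leftover $r(v)$ routings push one chip to the head of each edge in a short arc after $\varrho_1(v)$, so $\delta$ depends only on $\varrho_1$ and $r$.) Consequently $(x_1,\varrho_1)\leadsto(x_2,\varrho_2)$ forces (a)~for every non-sink $v$, $r(v)$ equals the cyclic distance from $\varrho_1(v)$ to $\varrho_2(v)$, and (b)~$x_2-x_1-\delta=L_G k$ for some $k\in\mathbb{Z}_{\geq 0}^{V(G)}$, the odometer being $o(v)=\deg^+(v)k(v)+r(v)$.

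Next I would dispatch the encoding. Since the ribbon structure at $v$ is stored as a cyclic list of polynomially many blocks of parallel edges, all the quantities above — $\deg^+(v)$, advancing a rotor by an exponentially large number of steps, the displacements $r$, and $\delta$ (distribute $k(v)$ copies to every out-neighbour, plus an arc of $r(v)$ further chips) — are computable in polynomial time by modular arithmetic and prefix sums over the blocks; $L_G$ and all chip vectors are of polynomial size from the adjacency matrix. Thus (a) can be checked directly, and (b) is exactly a linear-equivalence query, decidable in polynomial time by Proposition~\ref{prop:lin_ekv_in_P}, which moreover returns a reduced solution.

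The crux is that (a) and (b) are necessary but not sufficient: a valid odometer must also admit a \emph{legal ordering} of its routings, and this constraint is genuine — from an all-nonpositive $x_1$ no routing is legal, so only $o=0$ is schedulable no matter what (b) says. The combinatorial characterization I would aim for is that $(x_1,\varrho_1)\leadsto(x_2,\varrho_2)$ holds iff (a) holds and \emph{some} $k\ge 0$ solving (b) yields a schedulable $o$, with schedulability reduced to a polynomial-time test. The tools would be a least-action / cycle-deletion principle for rotor-routing mirroring Lemmas~\ref{l:per_vektor_kihagyhato}--\ref{l:korlatos_jatek_moho}: bounded rotor-routing games are abelian, and routing each $v$ exactly $\deg^+(v)p(v)$ times for a period vector $p$ is a ``do-nothing'' cycle (it preserves rotors and, by $L_Gp=0$, preserves chips) that can be deleted from any legal game. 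Using these, one reduces $k$ modulo the primitive period vectors of the sink components (Propositions~\ref{prop::period} and~\ref{p:period_szamolhato}), so only a polynomially bounded number of full cycles is ever needed, and ``$o$ is schedulable'' becomes ``the maximal $o$-bounded game from $(x_1,\varrho_1)$ has odometer exactly $o$''. Because $o$ may still have exponentially large entries, this bounded game cannot be simulated; instead I would compute by a closure (greedy) process the set $F$ of vertices that no legal game can ever route — those that remain nonpositive because no chip can reach them — which forces the odometer to vanish on $F$, hence $r|_F=0$ and $k|_F=0$, and then re-run the test of Proposition~\ref{prop:lin_ekv_in_P} with the $F$-coordinates pinned to $0$ (equivalently, a linear-equivalence query on the subgraph induced by $V(G)\setminus F$), which is again polynomial.

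The main obstacle is establishing the characterization rigorously: showing that the ``frozen-set'' condition, together with (a) and (b), is genuinely sufficient — that whenever they hold one can in fact order the (possibly exponentially many) routings of a suitable odometer legally, and that no subtler arc-dependent obstruction survives. This demands a careful reordering argument for rotor-routing games carried out structurally rather than by simulation, which is precisely why the abelian property of bounded games and the period-vector reduction are indispensable.
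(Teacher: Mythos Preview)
Your framework matches the paper closely through the first two thirds: reducing to unconstrained reachability via Proposition~\ref{prop:lin_ekv_in_P}, invoking the abelian property of the bounded game, and using a routing-period-vector deletion lemma to restrict to a routing-reduced odometer~$r$. All of this is exactly what the paper does (Proposition~\ref{p:rr_lin_ekv_in_P}, Lemmas~\ref{l:rr_per_vektor_kihagyhato} and~\ref{l:bounded_rr_abelian}, Corollaries~\ref{cor:rr_reach_with_reduced} and~\ref{cor:rr_reach_vs_bounded_game}).

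The genuine gap is your schedulability test. Your proposed criterion --- compute the ``frozen set'' $F$ of vertices that no legal game from $(x_1,\varrho_1)$ can ever route, then require $o|_F=0$ and re-check linear equivalence with those coordinates pinned --- is not sufficient, and this is not merely a missing proof but a wrong characterization. A small counterexample: take a vertex $a$ with two parallel edges to $b$, and $b,c$ forming a $2$-cycle, with $x_1=(1,0,0)$. Every vertex can be legally routed at least once, so $F=\emptyset$; yet the routing-reduced vector $r=(2,1,0)$ (transforming $(x_1,\varrho_1)$ to $y=(-1,1,1)$ with unchanged rotors) satisfies your test but is not schedulable, since $a$ has no in-edges and can be routed at most once. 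More generally, the obstruction is not ``some vertex is never routable'' but ``the $r$-bounded game gets stuck with some vertices under-routed,'' and those stuck vertices may well have been routed earlier.

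The paper's key idea, which you are missing, is to read the obstruction off the \emph{target} configuration rather than the source. Theorem~\ref{thm:reachability_char} states that schedulability of the routing-reduced $r$ is equivalent to two directly checkable conditions on $(y,\varrho_y)$: no vertex with $r(v)>0$ has $y(v)<0$ (this already kills the counterexample above), and in the rotor subgraph $\varrho_y$ every vertex of $T=\{v:y(v)=0,\ r(v)>0\}$ can reach some vertex outside~$T$. The sufficiency proof runs the $r$-bounded game to its stuck point, then performs the remaining (illegal) routings and argues by a chip-counting over the stuck set $Z$ that $y|_Z\equiv 0$ and that $\varrho_y$ restricted to $Z$ has no outgoing edge, hence contains a cycle inside~$T$. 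No greedy closure or simulation is needed; the test is a graph reachability query on $\varrho_y$.
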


\begin{remark}
The reason that we are interested in succinct encodings of multigraphs is that for chip-firing, encoding multigraphs by their andacency matrix ($O(|V(G)|+\log|E(G)|$ input size) or in unary encoding ($O(|V(G)|+|E(G)|)$ input size) does make a difference. 
The chip-firing halting problem is currently known to be in $\P$ for Eulerian digraphs in unary encoding, but no polynomial algorithm is known for the $O(|V(G)|+\log|E(G)|)$ input size. (Even though the problem is known to be in $\NP\cap\coNP$.) 
\end{remark}
\begin{figure}
\begin{center}

\begin{tikzpicture}[->,>=stealth',auto,scale=1.2,
                    thick,every node/.style={circle,draw,font=\sffamily\small}]
  \node[label=left:0] (1) at (0, 1) {};
  \node[label=below:0] (2) at (0, -1) {};
  \node[label=below:1] (3) at (-1.2, 0) {};
  \node[label=below:0] (4) at (1.2, 0) {};
  \path[every node/.style={font=\sffamily\small},line width=1.6pt]
    (3) edge [bend right=12] node {} (2)
    (2) edge [bend right=12] node {} (1)
    (1) edge [bend right=12] node {} (3);
  \path[every node/.style={font=\sffamily\small},dashed]
    (1) edge node {} (4)
    (2) edge node {} (4)
    (2) edge [bend right=12] node {} (3)
    (3) edge [bend right=12] node {} (1)
    (1) edge [bend right=12] node {} (2);
\end{tikzpicture}
\hspace{0.2cm}
\begin{tikzpicture}[->,>=stealth',auto,scale=1.2,
                    thick,every node/.style={circle,draw,font=\sffamily\small}]
  \node[label=left:1] (1) at (0, 1) {};
  \node[label=below:0] (2) at (0, -1) {};
  \node[label=below:0] (3) at (-1.2, 0) {};
  \node[label=below:0] (4) at (1.2, 0) {};
  \path[every node/.style={font=\sffamily\small},line width=1.6pt]
    (2) edge [bend right=12] node {} (1)
    (3) edge [bend right=12] node {} (1)
    (1) edge [bend right=12] node {} (3);
  \path[every node/.style={font=\sffamily\small},dashed]
    (1) edge node {} (4)
    (2) edge node {} (4)
    (3) edge [bend right=12] node {} (2)
    (2) edge [bend right=12] node {} (3)
    (1) edge [bend right=12] node {} (2);
\end{tikzpicture}
\hspace{0.2cm}
\begin{tikzpicture}[->,>=stealth',auto,scale=1.2, thick,every node/.style={circle,draw,font=\sffamily\small}]
  \node[label=left:0] (1) at (0, 1) {};
  \node[label=below:1] (2) at (0, -1) {};
  \node[label=below:0] (3) at (-1.2, 0) {};
  \node[label=below:0] (4) at (1.2, 0) {};
  \path[every node/.style={font=\sffamily\small},line width=1.6pt]
    (3) edge [bend right=12] node {} (1)
    (1) edge [bend right=12] node {} (2)
    (2) edge [bend right=12] node {} (1);
  \path[every node/.style={font=\sffamily\small},dashed]
    (1) edge node {} (4)
    (2) edge node {} (4)
    (3) edge [bend right=12] node {} (2)
    (1) edge [bend right=12] node {} (3)
    (2) edge [bend right=12] node {} (3);
\end{tikzpicture}
\end{center}
\caption{Let the ribbon structure be the one coming from the positive orientation of the plane. On the left panel, the leftmost vertex can be legally routed since it has a chip. The routing results in the configuration of the middle panel, where the upper vertex can be routed. Routing that vertex gives the rightmost configuration.}\label{fig:rr}
\end{figure}
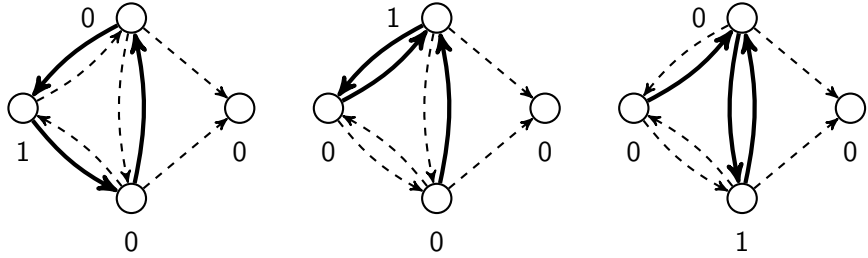

To analyze legal rotor-routing games, it is sometimes convenient to allow non-legal moves. We call a routing an \emph{unconstrained routing} if we perform a routing step so that the routed vertex might not have positive amount of chips.

For some chip-and-rotor configuration $(x,\varrho)$ and vector $r\in \mathbb{Z}^{V(G)}_{\geq 0}$, we denote by $\pi_r(x,\varrho)$ the chip-and-rotor configuration obtained after routing (in an unconstrained way) each vertex $v$ exactly $r(v)$ times from initial configuration $(x,\varrho)$. Note that this is well-defined, and
$\pi_r(x,\varrho)$ is computable in polynomial time since we can compute both the chip configuration and the rotor configuration by a simple calculation.

Similarly to the chip-firing game, it is useful to think about which vectors $r$ have $\pi_r(x,\varrho)=(x,\varrho)$ for some $(x,\varrho)$.
Clearly, in such a case each rotor needs to make some full turns, hence we need to have $r(v)=f(v)\cdot \deg^+(v)$ for each vertex. (If some vertex has $\deg^+(v)=0$, then this formula gives $r(v)=0$, but the routing of these vertices has no effect, so this is reasonable.) For a vector of the form $r(v)=f(v)\cdot \deg^+(v)$, routing $r$ has the same effect on the chip configuration as firing the firing vector $f$. Hence we get back to $(x,\varrho)$ if any only if $r$ is of the form $r(v)=p(v)\cdot \deg^+(v)$ for each $v\in V$ where $p$ is a period vector of $G$. We will call vectors of this form \emph{routing period vector}.

We call a vector \emph{routing reduced}, if it is not coordinatewise greater or equal to any routing period vector. 
Clearly, a vector $r$ is routing reduced, if for the vector $f$ with $f(v)=\lfloor \frac{r(v)}{\deg^+(v)}\rfloor$ for each vertex $v$, $f$ is a reduced firing vector.



We say that $(x_2,\varrho_2)$ is reachable from $(x_1,\varrho_1)$ in the unconstrained sense if there is a vector $r\geq 0$ such that $\pi_r(x_1,\varrho_1)=(x_2,\varrho_2)$. Clearly, in this case $r$ can be chosen to be routing reduced (by subtracting an appropriate routing period vector).

Reachablity in the unconstrained sense is a necessary condition for ``legal'' reachability. 
Fortunately, reachability in the unconstrained sense can be decided in polynomial time: 
\begin{prop}\label{p:rr_lin_ekv_in_P}
There is a polynomial algorithm that for a given digraph $G$ and chip-and-rotor configuations $(x_1,\varrho_1)$ and $(x_2,\varrho_2)$ decides whether there exists a nonnegative integer vector $r$ such that $\pi_r(x_1, \varrho_1)=(x_2,\varrho_2)$. If such a vector exists, a routing reduced $r$ can be computed in polynomial time. 
\end{prop}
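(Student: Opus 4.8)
The plan is to separate the effect of $\pi_r$ on the rotor coordinate from its effect on the chip coordinate, and thereby reduce the problem to the chip-firing linear equivalence problem of Proposition~\ref{prop:lin_ekv_in_P}.

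For the rotor part, note that routing a vertex $v$ exactly $r(v)$ times advances the rotor at $v$ by $r(v)$ steps along the cyclic order of the out-edges of $v$. Hence for every non-sink vertex $v$ the requirement that the two rotor configurations agree forces $r(v)$ to be congruent modulo $\deg^+(v)$ to the unique integer $s(v)\in\{0,1,\dots,\deg^+(v)-1\}$ for which advancing $\varrho_1(v)$ by $s(v)$ steps yields $\varrho_2(v)$; conversely, any $r$ with $r(v)=f(v)\deg^+(v)+s(v)$ for some $f\in\mathbb{Z}_{\ge 0}^{V(G)}$ makes the rotors agree. (Routings at a sink have no effect, so for a sink $v$ one may take $r(v)=0$.) The value $s(v)$ is computable in polynomial time from the succinct encoding: the cyclic order at $v$ is a cyclic list of blocks of consecutive parallel edges whose number is bounded by the input size, so one locates the blocks containing $\varrho_1(v)$ and $\varrho_2(v)$ and reads off the cyclic distance, an integer that may be exponentially large but is stored with polynomially many bits.

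For the chip part, write $r(v)=f(v)\deg^+(v)+s(v)$. The $f(v)$ full turns at $v$ change the chip vector by $f(v)L_G\mathbf{1}_v$, exactly as firing $v$ that many times, while the remaining $s(v)$ steps send one chip along each of the $s(v)$ out-edges of $v$ immediately following $\varrho_1(v)$, contributing a fixed vector $\delta_v$ with $-s(v)$ in coordinate $v$ and nonnegative entries elsewhere (nothing returns to $v$, since there are no loops). Because the cyclic order at $v$ has polynomially many blocks and the $s(v)$ steps traverse fewer than one full turn, $\delta_v$ — and hence $\delta:=\sum_v\delta_v$ — is computable in polynomial time by walking block by block. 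Since $\pi_r$ is well-defined, we may compute it by performing all routings at one vertex before passing to the next, so the rotor at each vertex sits at its $\varrho_1$-position when it is routed; summing over all vertices, the total effect is $(x_1,\varrho_1)\mapsto(x_1+L_Gf+\delta,\ \varrho_2)$. Thus $\pi_r(x_1,\varrho_1)=(x_2,\varrho_2)$ holds for some $r\ge 0$ if and only if $x_2-\delta=x_1+L_Gf$ for some $f\in\mathbb{Z}_{\ge 0}^{V(G)}$.

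The algorithm is then: compute $s$ and $\delta$ as above, and apply Proposition~\ref{prop:lin_ekv_in_P} to the pair $x_1$ and $x_2-\delta$. If no nonnegative $f$ with $x_2-\delta=x_1+L_Gf$ exists, answer ``no''; otherwise take the reduced firing vector $f$ it returns and output $r(v)=f(v)\deg^+(v)+s(v)$ (with $r(v)=0$ at sinks). Since $0\le s(v)<\deg^+(v)$ we have $\lfloor r(v)/\deg^+(v)\rfloor=f(v)$, so $r$ is routing reduced precisely because $f$ is a reduced firing vector, as recalled before the proposition. Correctness in both directions follows from the two reductions, using for the forward direction that the decomposition $r(v)=f(v)\deg^+(v)+s(v)$ with $f\ge 0$ is forced at non-sinks and that the sink coordinates of $r$ are irrelevant. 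The only step needing genuine care is the polynomial-time computation of $s$ and $\delta$ under the succinct encoding; the rest is a direct translation into an instance of Proposition~\ref{prop:lin_ekv_in_P}.
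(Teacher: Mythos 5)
Your proof is correct and follows essentially the same route as the paper: your $s$ is the paper's $r_1$, your intermediate configuration $(x_1+\delta,\varrho_2)$ is the paper's $(y,\varrho_2)$, and both arguments then reduce to finding a nonnegative reduced $f$ with $L_Gf=x_2-\delta-x_1$ via Proposition~\ref{prop:lin_ekv_in_P}. The extra care you take with the succinct encoding of the ribbon structure and with the routing-reducedness of the output is a welcome elaboration of details the paper leaves implicit.
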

\begin{proof}
At each vertex $v$, we need at least as many routings so that the rotor at $v$ turns into the position $\varrho_2(v)$. We can achieve this by routing each vertex $v$ some $r_1(v)<\deg^+(v)$ times. Now we are in a chip-and-rotor configuration $(y,\varrho_2)$ for some chip configuration $y$. We need to determine if there exist a nonnegative vector transforming $(y,\varrho_2)$ to $(x_2,\varrho_2)$. For this, we need a vector $r_2\geq 0$ such that $r_2(v)$ is a multiple of $\deg^+(v)$ for each $v$. Hence the suitable vectors are exactly of the form $r_2(v)=z(v)\cdot\deg^+(v)$ for each $v$ where $z\in\mathbb{Z}_{\geq 0}^{V(G)}$ is a solution to $L_G z=x_2 - y$. By Proposition \ref{prop:lin_ekv_in_P}, the existence of such a $z$ can be decided in polynomial time, and if the answer is yes, a reduced $z$ can also be computed. Now $r=r_1 + r_2$, and as $z$ was reduced and $r_1\leq \deg^+$, $r$ will also be routing reduced.
\end{proof}

Now we can state our condition for the reachability of chip-and-rotor configurations. Note that by Proposition \ref{p:rr_lin_ekv_in_P}, the following condition can be checked in polynomial time.

\begin{thm}\label{thm:reachability_char}
Suppose that $(x,\varrho_x)$ and $(y,\varrho_y)$ are two chip-and-rotor configurations on the digraph $G$. 
Then $(x,\varrho_x)\leadsto (y,\varrho_y)$ if and only if $(y,\varrho_y)$ is reachable from $(x,\varrho_x)$ in the unconstrained sense and for the routing reduced vector $r$ transforming $(x,\varrho_x)$ to $(y,\varrho_y)$, we have \begin{align*}
S_1=\{v\in V \mid y(v) < 0 \text{ and } r(v)>0 \} =\emptyset
\end{align*} and for $T := \{v \in V \mid y(v) = 0 \text{ and }r(v) > 0\}$ we have
$$S_2 := \{v \in T \mid \text{every vertex reachable from $v$ in $\varrho_y$ is contained in $T$} \}=\emptyset. $$
\end{thm}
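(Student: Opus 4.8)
The plan is to prove both directions separately. For the ``only if'' direction, suppose $(x,\varrho_x)\leadsto(y,\varrho_y)$ via a legal game with odometer $o$. Reachability in the unconstrained sense is immediate since the legal game is in particular an unconstrained routing sequence, so $\pi_o(x,\varrho_x)=(y,\varrho_y)$; moreover, subtracting routing period vectors (which as noted correspond to firing period vectors and bring us back to the same chip-and-rotor configuration) we may assume the relevant vector $r$ is routing reduced, and I would need a small lemma that the routing reduced vector is unique (analogous to the reducedness discussion for firing vectors), so that ``the'' routing reduced $r$ is well-defined. Now I must show $S_1=\emptyset$ and $S_2=\emptyset$. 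For $S_1$: if $v$ is routed at least once in the legal game and $y(v)<0$, consider the \emph{last} time $v$ is routed; legality requires $x(v)>0$ at that moment, and afterward $v$ only gains chips (it is never routed again), so the final count $y(v)\geq 0$, a contradiction. For $S_2$: suppose $v\in T$, so $y(v)=0$ and $v$ is routed, and suppose every vertex reachable from $v$ along rotor-edges of $\varrho_y$ lies in $T$. The key idea is to look at the rotor subgraph of $\varrho_y$ restricted to the set $R$ of vertices reachable from $v$ in $\varrho_y$: each such vertex has out-degree $1$ in the rotor subgraph and (by the $S_2$ hypothesis) its rotor points inside $R$, so $R$ contains a directed cycle in $\varrho_y$ and in fact every vertex of $R$ lies on a path into a cycle contained in $R\subseteq T$. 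Tracking chips: every vertex of $R$ has $y=0$ and was routed, hence (comparing $\varrho_x$ and $\varrho_y$ and counting full turns) the last routing at each such vertex sent its chip along $\varrho_y(\cdot)$, i.e.\ into $R$; so in the final configuration the total number of chips on $R$ is $0$, yet at the moment of the last routing performed among vertices of $R$, that vertex needed a positive chip, and that chip could only have gone to another vertex of $R$ — which is then never routed again, so it keeps that chip, forcing a positive coordinate of $y$ on $R$, a contradiction. I would phrase this last argument via ``consider the last routing of a vertex in $R$'' to make it clean.

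For the ``if'' direction, assume $(y,\varrho_y)$ is reachable in the unconstrained sense via the routing reduced $r$, with $S_1=S_2=\emptyset$. I want to realize $r$ as the odometer of a legal game from $(x,\varrho_x)$. The natural strategy is a greedy / exchange argument: perform routings one at a time, always routing some vertex that (a) still has routings remaining according to the current deficit vector $r - (\text{routings done so far})$ and (b) currently has a positive chip, and show that as long as routings remain, such a vertex exists — so the process cannot get stuck before exhausting $r$, and since $\pi_r(x,\varrho_x)=(y,\varrho_y)$ the abelian-type uniqueness (every maximal $r$-bounded game has the same odometer, cf.\ the chip-firing Lemma~\ref{l:korlatos_jatek_moho}; one needs the analogous statement for bounded rotor-routing, which should follow by the same strong-convergence argument) forces the game to end exactly at $(y,\varrho_y)$. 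So the crux is: \emph{if some routings remain, some vertex with a remaining routing has a positive chip}. Suppose not, i.e.\ at some intermediate chip configuration $\tilde x$ every vertex $v$ with remaining budget $b(v):=r(v)-(\text{done})(v)>0$ has $\tilde x(v)\leq 0$. Let $B=\{v: b(v)>0\}\neq\emptyset$. The plan is to derive a contradiction with $S_1=S_2=\emptyset$ by pushing the ``stuck'' configuration forward to $(y,\varrho_y)$: routing the remaining budget $b$ (unconstrainedly) from $\tilde x$ yields $y$, and this only moves chips around, so the chips now sitting on $B$ (all $\leq 0$) can only be increased by chips arriving from routings of vertices in $B$. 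A vertex $v\in B$ with $\tilde x(v)<0$ ends with $y(v)<0$ unless it receives a chip, but it is routed so $v\in S_1$ would follow — wait, more carefully: I would show that some $v\in B$ has $y(v)<0$ (giving $v\in S_1$) unless the negative chips are exactly balanced, in which case every $v\in B$ has $\tilde x(v)=0$ and the total chip count on $B$ is preserved at $0$; then a counting argument on where the routed chips within $B$ go, combined with the rotor subgraph of $\varrho_y$ restricted to $B$, produces a vertex $v\in B$ all of whose $\varrho_y$-reachable vertices lie in $\{w: y(w)=0, r(w)>0\}\supseteq$ the relevant set — i.e.\ $v\in S_2$. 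This is where I expect to spend the most effort: making precise that ``stuck with budget remaining'' forces either a negative final coordinate on the stuck set (hitting $S_1$) or a $\varrho_y$-closed subset of the stuck set sitting at zero (hitting $S_2$).

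The main obstacle, then, is the ``if'' direction, specifically the structural claim that a legal game cannot deadlock before using up the routing reduced budget $r$ unless $S_1\cup S_2\neq\emptyset$. The right framework is probably to consider, at a hypothetical deadlock, the set $B$ of vertices with remaining budget, observe that no chip ever leaves $B$ from that point on (vertices outside $B$ are not routed again), analyze the rotor subgraph $\{\varrho_y(v): v\in B\}$ and its sink components within $B$, and show each such component either forces a negative $y$-value (contradicting $S_1=\emptyset$) or is a $\varrho_y$-closed subset of $\{v: y(v)=0,\ r(v)>0\}$, exhibiting an element of $S_2$. Two auxiliary facts are needed and should be stated as lemmas first: (i) uniqueness of the routing reduced vector between two chip-and-rotor configurations, and (ii) the abelian property of $b$-bounded rotor-routing games (same odometer for every maximal one), both provable by the standard strong-convergence / exchange arguments already used for chip-firing in the excerpt.
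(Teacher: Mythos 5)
Your proposal is correct and follows essentially the same route as the paper: the same auxiliary lemmas (the abelian property of the bounded rotor-routing game and the reduction of a legal odometer by routing period vectors), the same chip-counting for necessity, and the same deadlock analysis of the stuck set $B$ (total chips on $B$ squeezed between $\leq 0$ and $\geq 0$, forcing $y\equiv 0$ on $B$ and $\varrho_y$ to point into $B$, yielding a cycle in $S_2$) for sufficiency. The only cosmetic difference is that you establish $S_2=\emptyset$ in the necessity direction via a ``last routing in $R$'' argument, whereas the paper uses an induction on the number of routings; both are valid.
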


For proving this theorem, we need some lemmas that are analogous to what we have seen for chip-firing. The following lemma is a special case of \cite[Lemma 4.2]{Abelian_nonhalting}. We include its simple proof for completion.
\begin{lemma}\label{l:rr_per_vektor_kihagyhato}
   Let $p$ be a routing period vector of a digraph $G$, and suppose that $\alpha=(v_1, v_2, \dots, v_s)$ is a legal sequence of routings on $G$ from some initial chip-and-rotor configuration. Let $\alpha'$ be the sequence obtained from $\alpha$ by deleting the first $p(v)$ occurrence of each vertex $v$ (if $v$ occurs less than $p(v)$ times in $\alpha$, then we delete all of its occurrences). Then $\alpha'$ is also a legal sequence of routings from the same initial chip-and-rotor configuration.
\end{lemma}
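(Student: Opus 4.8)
The plan is to reduce legality to a condition on partial odometers, using that $\pi_r(x,\varrho)$ depends only on how many times each vertex is routed and not on the order, and then to compare $\alpha'$ step by step against the corresponding steps of $\alpha$. First we may assume every $v_i$ is a non-sink: routing a sink is a no-op, so deleting all sink-routings from both $\alpha$ and $\alpha'$ preserves legality and all intermediate configurations, and since $p$ vanishes on sinks this deletion commutes with forming $\alpha'$ from $\alpha$. Next I would record the following consequence of the well-definedness of $\pi_r$: the configuration reached from $(x,\varrho)$ by any sequence of (possibly unconstrained) routings equals $\pi_r(x,\varrho)$, where $r$ is the odometer accumulated so far; in particular $\pi_{r+r'}(x,\varrho)=\pi_{r'}\bigl(\pi_r(x,\varrho)\bigr)$. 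Hence a routing sequence $(w_1,\dots,w_m)$ from $(x,\varrho)$ is legal if and only if for every $i$ the chip coordinate of $w_{i+1}$ in $\pi_{r_i}(x,\varrho)$ is positive, where $r_i=\sum_{\ell\le i}\mathbf 1_{w_\ell}$.

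Then I would fix notation: write $\alpha'=(v_{i_1},\dots,v_{i_t})$ with $i_1<\cdots<i_t$ the positions of $\alpha$ that survive the deletion, and let $r^{(k)}$ and $s^{(j)}$ denote the partial odometers of $\alpha$ after $k$ steps and of $\alpha'$ after $j$ steps. Fix $j$, put $w:=v_{i_{j+1}}$ and $K:=i_{j+1}-1$. Counting, for each vertex $u$, which of its occurrences among the first $K$ steps of $\alpha$ get deleted gives $r^{(K)}=s^{(j)}+\delta$ with $\delta(u)=\min\{p(u),r^{(K)}(u)\}$, so $0\le\delta\le p$. Since the occurrence of $w$ at position $i_{j+1}$ survives, it is not among the first $p(w)$ occurrences of $w$ in $\alpha$, hence $r^{(K)}(w)\ge p(w)$ and therefore $\delta(w)=p(w)$.

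Finally, set $(z,\tau):=\pi_{s^{(j)}}(x,\varrho)$, so $\pi_{r^{(K)}}(x,\varrho)=\pi_\delta(z,\tau)$. Routing $\delta$ from $(z,\tau)$ removes exactly $\delta(w)$ chips from $w$ (one per routing of $w$; nothing returns to $w$ since $G$ has no loops) and delivers some $C\ge 0$ chips to $w$ from routings of the other vertices, so the chip coordinate of $w$ in $\pi_\delta(z,\tau)$ is $z(w)-\delta(w)+C$. If we then route $p-\delta$ further steps, delivering $C'\ge 0$ more chips to $w$, the accumulated odometer is the routing period vector $p$, so $\pi_p(z,\tau)=(z,\tau)$; comparing the chip coordinate of $w$ before and after shows $w$ regains exactly what it lost, i.e. $C+C'=\delta(w)$, whence $C\le\delta(w)$. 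Thus the chip coordinate of $w$ in $\pi_{r^{(K)}}(x,\varrho)$ equals $z(w)-\delta(w)+C\le z(w)$, the chip coordinate of $w$ in $\pi_{s^{(j)}}(x,\varrho)$. Since $\alpha$ is legal and its step $i_{j+1}$ routes $w$, the left-hand side is positive, so $z(w)>0$; by the criterion above this is exactly legality of step $j+1$ of $\alpha'$. As $j$ was arbitrary, $\alpha'$ is legal.

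The step I expect to be the main obstacle is this last comparison, i.e. showing that enlarging the odometer by $\delta$ (with $0\le\delta\le p$ and $\delta(w)=p(w)$) cannot increase the chip count at $w$. The ``complete to a full routing period'' device above settles it without invoking any monotonicity of chip flow, but one must be careful to run the completion from the correct intermediate configuration and to use the no-loop hypothesis together with the fact that $\pi_p$ is the identity (as $p$ is a routing period vector) exactly where needed.
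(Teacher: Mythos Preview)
Your proof is correct and follows essentially the same approach as the paper's: both compare the partial odometer of $\alpha'$ at step $j$ to that of $\alpha$ at step $i_{j+1}-1$, observe that the difference $\delta$ satisfies $0\le\delta\le p$ with $\delta(w)=p(w)$, and use that $p$ is a routing period vector to conclude the chip count at $w$ in $\alpha'$ is at least that in $\alpha$. The only cosmetic difference is packaging: the paper argues directly that each in-neighbor $u$ routed at most $p(u)$ fewer times, so $w$ can only have gained chips relative to $\alpha$, whereas you make this precise via the ``complete $\delta$ to the full period $p$'' device, which cleanly yields $C\le\delta(w)$ without appealing to monotonicity of $g_u(m)$ in $m$.
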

\begin{proof} The proof is analogous to \cite[Lemma 4.3]{BL92}.
Let $\alpha'=(v_{i_1}, \dots, v_{i_m})$. Suppose by induction that routing $(v_{i_1}, \dots v_{i_{k-1}})$ was legal for some $k$. We show that routing $v_{i_k}$ is also legal.

In the game $\alpha$, one can legally route $v_{i_k}$, hence at that moment, there is a positive amount of chips in it. Compared to $\alpha$, in $\alpha'$ up to this point the vertex $v_{i_k}$ was routed $p(v_{i_k})$ times less, hence it gave out $p(v_{i_k})$ less chips. Up to this point, any in-neighbor $u$ of $v_{i_k}$ routed at most $p(u)$ times less than in $\alpha$. As $p$ is a routing period vector, if each in-neighbor $u$ routed exactly $p(u)$ times less than in $\alpha$, then $v_{i_k}$ would have the same number of chips at its turn as in $\alpha$. If some in-neighbor decreased its number of routings by less than $p(u)$, then $v_{i_k}$ can potentially have more chips at this point than in $\alpha$. Hence $v_{i_k}$ necessarily has the required amount of chips to be able to perform the routing.
\end{proof}

\begin{cor}\label{cor:rr_reach_with_reduced}
If $(x,\varrho_x)\leadsto (y,\varrho_y)$, then there exist a legal game transforming $(x,\varrho_x)$ to $(y,\varrho_y)$ with a routing reduced odometer.
\end{cor}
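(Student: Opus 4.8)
The plan is to take a legal game realizing $(x,\varrho_x)\leadsto(y,\varrho_y)$ and, as long as its odometer fails to be routing reduced, ``peel off'' a routing period vector from it via Lemma \ref{l:rr_per_vektor_kihagyhato}, strictly decreasing the odometer. To make this precise I would argue by minimality: among all legal games transforming $(x,\varrho_x)$ into $(y,\varrho_y)$ --- there is at least one, by hypothesis --- choose one, call it $\alpha$, whose odometer $r$ minimizes $\sum_{v\in V(G)}r(v)$. I claim $r$ is routing reduced. If not, then by definition $r$ is coordinatewise at least some nonzero routing period vector $q$ (here one reads ``routing reduced'' as ``not coordinatewise $\geq$ any \emph{nonzero} routing period vector'', which is exactly what makes $q\neq 0$ available).

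Next I would apply Lemma \ref{l:rr_per_vektor_kihagyhato} with period vector $q$: deleting the first $q(v)$ occurrences of each vertex $v$ from $\alpha$ produces a legal sequence of routings $\alpha'$ from $(x,\varrho_x)$, and since $r\geq q$ no vertex is over-deleted, so the odometer of $\alpha'$ is exactly $r-q$. The endpoint of $\alpha'$ is $\pi_{r-q}(x,\varrho_x)$, because the effect of a sequence of (legal or not) routings depends only on how many times each vertex is routed. Since $q$ is a routing period vector, $\pi_q$ fixes every chip-and-rotor configuration; combining this with the well-definedness of $\pi_{(\cdot)}$ gives
$$\pi_{r-q}(x,\varrho_x)=\pi_q\bigl(\pi_{r-q}(x,\varrho_x)\bigr)=\pi_r(x,\varrho_x)=(y,\varrho_y).$$
Hence $\alpha'$ is a legal game transforming $(x,\varrho_x)$ into $(y,\varrho_y)$ with odometer $r-q$, and $\sum_v(r-q)(v)<\sum_v r(v)$ because $q\neq 0$, contradicting the choice of $\alpha$. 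Therefore $r$ is routing reduced, which is the assertion.

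The only genuinely delicate point is the identity $\pi_{r-q}(x,\varrho_x)=(y,\varrho_y)$: Lemma \ref{l:rr_per_vektor_kihagyhato} only guarantees that $\alpha'$ is \emph{legal}, not where it ends, so one must separately invoke that unconstrained routing is order-independent and that routing along a routing period vector acts as the identity on configurations. Everything else is bookkeeping; in particular sinks need no special treatment, since any routing period vector vanishes on sinks and hence no sink routings are ever deleted.
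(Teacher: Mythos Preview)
Your proof is correct and is exactly the argument the paper has in mind: the corollary is stated without proof immediately after Lemma~\ref{l:rr_per_vektor_kihagyhato}, the intended derivation being precisely to peel off routing period vectors via that lemma until the odometer is routing reduced. Your minimality packaging is a clean way to phrase this, and your remark that one must separately check $\pi_{r-q}(x,\varrho_x)=(y,\varrho_y)$ using order-independence and the fact that $\pi_q$ is the identity is a genuine point that the paper leaves implicit.
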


We have already seen that one can compute in polynomial time whether $(y,\varrho_y)$ is reachable from $(x,\varrho_x)$ in the unconstrained sense, and if yes, give the routing reduced vector transforming $(x,\varrho_x)$ to $(y,\varrho_y)$. Hence for deciding reachability it is now enough to decide if there is a legal game with the given routing reduced vector as odometer. To answer this question, we introduce the bounded game for rotor-routing.

Fix a vector $r\geq 0$. The $r$-bounded rotor-routing game proceeds as follows: If there is a vertex $v$ with positive number of chips such that $v$ has been routed less than $r(v)$ times, then choose one such vertex and route it. If each vertex $v$ either has at most 0 chips or has been routed $r(v)$ times, then the bounded game stops.
This bounded game also has the abelian property: 

\begin{lemma}\label{l:bounded_rr_abelian}
For any initial configuration $(x,\varrho)$ and $r\geq 0$, any maximal $r$-bounded rotor-routing game with initial configuration $(x,\varrho)$ ends in the same chip-and-rotor configuration, and the odometer is the same in each maximal running. 
\end{lemma}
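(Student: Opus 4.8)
The plan is to mimic the classical proof of the abelian property for bounded chip-firing (Lemma \ref{l:korlatos_jatek_moho}), adapting it to the rotor-routing setting where routings replace firings. I would first argue the claim about the \emph{set} of vertices that get ``stuck'' (i.e.\ are routed fewer than $r(v)$ times) and the \emph{final configuration}, and then deduce that the odometer is forced.

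\medskip

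\noindent\textbf{Step 1: A local exchange / diamond lemma.} Suppose we are at some configuration $(z,\sigma)$ reached legally within the $r$-bounded game, and both $u$ and $w$ are candidates for the next legal $r$-bounded routing (each has a positive chip count and has been routed strictly fewer than $r(u)$, $r(w)$ times). I would check that routing $u$ then $w$ is legal and leads to the same configuration as routing $w$ then $u$: routing $u$ changes the chip count at $w$ only by possibly adding a chip (if $\sigma^+(u)$ points to $w$) and never removes one, and it never advances $w$'s rotor, so $w$ is still a legal $r$-bounded move after routing $u$; by symmetry the same holds with $u$ and $w$ swapped, and since the two elementary routings act on disjoint rotors and their chip effects commute (each is a fixed $-\mathbf 1_{v}+\mathbf 1_{v'}$), the resulting chip-and-rotor configuration is identical. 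This is the standard ``strong confluence'' step.

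\medskip

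\noindent\textbf{Step 2: From local to global confluence.} Given two maximal $r$-bounded games $A$ and $B$ from $(x,\varrho)$, I would show their odometers are equal by a standard induction on the length of $A$ (or on $\sum_v o_A(v)$). The key sub-claim: if the first routing of $A$ is at vertex $v$, then some maximal run $B$ must also route $v$ at least once. Indeed, if $B$ never routes $v$, then in $B$ the vertex $v$ never loses a chip and its in-neighbours are routed at most as many times as... — more carefully, I would argue that a terminal configuration of the $r$-bounded game in which $v$ has been routed $0<r(v)$ times cannot have $v$ with a positive chip count, yet $v$ started with a positive chip count in $(x,\varrho)$ and chips at $v$ only increase when $v$ is not routed; comparing the two runs via Step 1 (repeatedly commuting $v$'s first routing to the front of $B$) gives the contradiction. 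Once we know both runs route $v$, we commute the first occurrence of $v$ in $B$ to the front, apply Step 1 to align it with the first move of $A$, peel off that common first move, and apply the induction hypothesis to the shorter games started from the common successor configuration $(x-\mathbf 1_v+\mathbf 1_{v'},\varrho')$ with bound $r-\mathbf 1_v$. This yields equality of odometers, and since the end configuration is a deterministic function of $(x,\varrho)$ and the odometer, the end configurations agree as well.

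\medskip

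\noindent\textbf{Main obstacle.} The delicate point is Step 2's sub-claim that a vertex routed by one maximal game is routed by every maximal game. In chip-firing this is immediate because ``legal'' only constrains the fired vertex's own chip count, and firing monotonically helps neighbours; here the rotor bookkeeping adds a wrinkle, but the argument survives because each routing still transfers exactly one chip and advances exactly one rotor, and none of these operations can decrease the chip count of a not-yet-routed vertex or rewind a rotor. I would make this monotonicity precise as a small lemma (``if game $A$ and game $B$ are both legal $r$-bounded prefixes from $(x,\varrho)$ and $o_B(u)\le o_A(u)$ for all $u$ in the in-neighbourhood of $v$ including $v$ itself, then the chip count at $v$ after $B$ is at least its chip count at the corresponding point of $A$''), which is exactly the estimate already used in the proof of Lemma \ref{l:rr_per_vektor_kihagyhato}, and then everything goes through as in the chip-firing case.
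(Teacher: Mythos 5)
Your proof is correct, but it follows a genuinely different route from the paper's. You prove the abelian property by local confluence: a diamond lemma for two simultaneously legal routings, followed by an induction that commutes the first move of one run to the front of the other and peels it off (together with the observation that a maximal run must route any vertex that is legal at the start, since a never-routed vertex can only gain chips). The paper instead uses a short global counting argument in the style of Thorup: assuming two maximal runs with different odometers, it stops the second run at the first moment some vertex $v$ is about to exceed its routing count in the first run, and compares the multiset of edge traversals in the truncated second run with that of the first run to conclude that $v$ still holds a chip at the end of the first run, contradicting maximality. The paper's argument is shorter and avoids the commutation bookkeeping; yours is more modular and actually establishes the slightly stronger fact that the odometer of \emph{any} legal $r$-bounded prefix is dominated coordinatewise by the maximal odometer, and it reuses exactly the monotonicity estimate already present in the proof of Lemma \ref{l:rr_per_vektor_kihagyhato}. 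One small presentational remark: in your Step 2 the sub-claim that every maximal run routes $v$ at least once does not need any commuting — it follows directly from maximality once you note that $v$'s chip count cannot decrease while $v$ is unrouted; the commutation is only needed afterwards, to align the two runs' first moves before invoking the induction hypothesis.
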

We note that this follows from the general ``abelian theorem'' of \cite{Bond-Levine} as the bounded rotor-routing game is also an abelian network. Still, for completeness we include a direct proof by a variant of an argument of Thorup \cite{thorup}, as this is also very simple.

\begin{proof}
Suppose that there are two maximal bounded games where the odometers are different. Suppose that the odometer of the first running is $o_1$ and the odometer of the second running is $o_2$. By symmetry, we can suppose that there exist a vertex $v_0$ such that $o_1(v_0)<o_2(v_0)$. Play the running with odometer $o_2$ and stop it at the first moment when some vertex $v$ is to be routed for the $o_1(v)+1^{th}$ time. So far, $v$ has transmitted as many chips as altogether in the running with odometer $o_1$. However, all other vertices transmitted at most as many. As the two runnings start from the same initial configuration, the multiset of edge traversals by chips in the stopped second run is a subset of of the multiset of edge traversals by chips in the first run. Hence in particular, $v$ has received at most as many chips in the stopped second run as in the first run. In the second run, $v$ can be routed at this moment, hence it has at least one chip. Thus, $v$ has a chip at the end of the first run, which means $o_1(v)=r(v)$ contradicting the assumption that $r(v)\geq o_2(v)>o_1(v)$.
\end{proof}

We denote by $odom(x,\varrho; r)$ the maximal odometer in the $r$-bounded rotor-routing game, started from $(x,\varrho)$.

\begin{cor}\label{cor:rr_reach_vs_bounded_game}
$(x,\varrho_x)\leadsto (y,\varrho_y)$ is equivalent to the property that  $(y,\varrho_y)$ is reachable from $(x,\varrho_x)$ in the unconstrained sense, and for the routing reduced vector $r$ transforming $(x,\varrho_x)$ to $(y,\varrho_y)$, we have $odom(x,\varrho_x; r) = r$.
\end{cor}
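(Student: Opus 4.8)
The plan is to identify legal games from $(x,\varrho_x)$ to $(y,\varrho_y)$ having routing reduced odometer with the maximal $r$-bounded games that ``complete'', i.e.\ have odometer $r$, and then read off both directions of the equivalence. Two elementary observations carry the argument. First, a legal game is in particular a sequence of unconstrained routings, so a legal game whose odometer is $f$ ends in the configuration $\pi_f(x,\varrho_x)$ (using that $\pi_f$ does not depend on the order in which the routings are performed). In particular, if $(x,\varrho_x)\leadsto(y,\varrho_y)$ then $(y,\varrho_y)$ is reachable from $(x,\varrho_x)$ in the unconstrained sense, so the routing reduced vector $r$ with $\pi_r(x,\varrho_x)=(y,\varrho_y)$ exists; it is moreover unique (uniqueness of the routing reduced vector follows from uniqueness of reduced firing vectors for linear equivalence), which justifies speaking of ``the'' such vector. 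Second, a legal game that routes each vertex $v$ exactly $r(v)$ times is a legitimate run of the $r$-bounded game --- every step of it is a legal routing, and whenever a vertex $v$ is routed it has so far been routed fewer than $r(v)$ times --- and it is a \emph{maximal} such run, because afterwards every vertex has been routed $r(v)$ times so the $r$-bounded game halts; conversely, by definition every routing made during any $r$-bounded game is legal.

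Given these, the forward direction goes as follows. Assume $(x,\varrho_x)\leadsto(y,\varrho_y)$. By Corollary~\ref{cor:rr_reach_with_reduced} there is a legal game from $(x,\varrho_x)$ to $(y,\varrho_y)$ whose odometer is routing reduced; by the first observation this odometer equals $r$. By the second observation this game is a maximal $r$-bounded game with odometer $r$, so the abelian property of the $r$-bounded game (Lemma~\ref{l:bounded_rr_abelian}) yields $odom(x,\varrho_x;r)=r$. For the backward direction, assume $(y,\varrho_y)$ is reachable from $(x,\varrho_x)$ in the unconstrained sense and $odom(x,\varrho_x;r)=r$. Pick any maximal $r$-bounded game started from $(x,\varrho_x)$; by Lemma~\ref{l:bounded_rr_abelian} its odometer is $r$, so it routes every vertex $v$ exactly $r(v)$ times, and every one of its moves is a legal routing. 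Hence it is a legal game which, by the first observation, ends in $\pi_r(x,\varrho_x)=(y,\varrho_y)$; thus $(x,\varrho_x)\leadsto(y,\varrho_y)$.

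The proof is short, and most of it is routine once the correspondence above is in place; the points that need a little care are verifying that the correspondence works both ways (invoking the halting rule of the bounded game in one direction, and the legality of every bounded move in the other) and using Lemma~\ref{l:bounded_rr_abelian} to pass from ``some maximal run has odometer $r$'' to ``$odom(x,\varrho_x;r)=r$''. The one genuinely external input is the uniqueness of the routing reduced vector transforming $(x,\varrho_x)$ to $(y,\varrho_y)$, which reduces to the uniqueness of reduced firing vectors for linear equivalence.
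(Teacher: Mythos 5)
Your proof is correct and is exactly the derivation the paper intends: the paper states this as an immediate corollary of Corollary~\ref{cor:rr_reach_with_reduced} and Lemma~\ref{l:bounded_rr_abelian} without writing out the details, and your identification of legal games with routing reduced odometer as maximal $r$-bounded runs is the intended bridge. The one input you flag --- uniqueness of the routing reduced vector, inherited from the uniqueness of reduced firing vectors --- is likewise implicitly assumed by the paper when it speaks of ``the'' routing reduced vector, so nothing is missing.
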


\begin{proof}[Proof of Theorem \ref{thm:reachability_char}]
We first show that the conditions are necessary. Unconstrained reachability is clearly necessary for the reachability, as it means reachability in the weaker sense where no nonnegativity is required for the routings.

We claim that if $(x,\varrho_x)\leadsto (y,\varrho_y)$, then for any $v$ with $r(v)>0$ we need to have $y(v)\geq 0$. Indeed, by Corollary \ref{cor:rr_reach_with_reduced}, in this case there is a legal rotor-routing game from $(x,\varrho_x)$ to $(y,\varrho_y)$ with odometer $r$. If $v$ is routed in a legal game, then after the moment of the first routing, it has a nonnegative amount of chips. Moreover, $v$ can only lose chips by routings, and it cannot go negative by a legal routing. Hence after its first routing, $v$ always has a nonnegative number of chips, thus, $y(v)\geq 0$. This implies $S_1=\emptyset$.

We also claim that if a legal game leads from $(x,\varrho_x)$ to $(y,\varrho_y)$ and has odometer $r$, then in the rotor subgraph $\varrho_y$, from each routed vertex $v$ some vertex $u$ with either $y(u)>0$ or with $r(u)=0$ is reachable. This can be proved by induction for the number of routings. There is nothing to prove if there are no routings. If the statement is true after some routings and we make one more routing, then the additionally routed vertex $v$ has at least one chip before the additional routing. After the routing, the rotor at $v$ points to the vertex $u$ where the chip was transmitted. Either $u$ had at least 0 chips before the routing, in which case now it has a positive amount of chips, or $u$ had a negative number of chips, but then it has not been routed yet. Hence the statement is true for $v$. The rotor-edges of vertices other than $v$ do not change. If $v$ was reachable from some vertex $w$ in the rotor subgraph, then $u$ is reachable from $w$ after the routing. Hence the condition stays true for all other routed vertices. This implies that $S_2=\emptyset$, hence we have proved the necessity of the conditions.

For the sufficiency, it is enough to show that if $(x,\varrho_x)\not\leadsto (y,\varrho_y)$ but $(y,\varrho_y)$ is reachable from $(x,\varrho_x)$ in the unconstrained sense via the primitive routing vector $r$, moreover, $S_1=\emptyset$, then there is a cycle $C$ in $\varrho_y$ with $y(v)=0$ for each $v\in C$, but where each vertex $v\in C$ has $r(v)>0$. In this case all vertices of $C$ are in $S_2$ since in $\varrho_y$ only the vertices of $C$ are reachable from them.

By Corollary \ref{cor:rr_reach_vs_bounded_game}, if $(x,\varrho_x)\not\leadsto (y,\varrho_y)$ but $(y,\varrho_y)=\pi_r(x,\varrho_x)$, then the $r$-bounded rotor-routing game ends so that there there is a nonempty set $Z\subseteq V$ of vertices such that each $v\in Z$ has been routed less than $r(v)$ times, but currently has at most 0 chips. This also implies that $r(v)>0$ for each $v\in Z$. Suppose that this bounded game ends with chip-and-rotor configuration $(z,\varrho_z)$.

Now do the remaining routings in some order, such that each vertex gets routed $r(v)$ times altogether. This will not be a legal game, but nevertheless, at the end, the configuration will be $(y,\varrho_y)$.  Each vertex $v\in Z$ starts from $z(v)\leq 0$ and ends with $y(v)\geq 0$ (since $r(v)> 0$, and we supposed that $S_1=\emptyset$).
As only the vertices of $Z$ are routed in this second phase, in the second phase, vertices of $Z$ can only gain chips from vertices in $Z$. As $z(Z)\leq 0$ and $y(Z)\geq 0$, we conclude that in the second phase vertices only pass chips to vertices in $Z$, and each vertex of $Z$ receives as many chips as it passes away. Specifically, each vertex $v\in Z$ has $z(v)=y(v)=0$.
The final rotor configuration $\varrho_y$ shows for each vertex the edge through which it transmitted its last chip. Hence for each vertex $v\in Z$, $\varrho_y(v)$ is an edge pointing to some vertex in $Z$. This means that each vertex $v\in Z$ has out-degree at least one in $\varrho_y$, and no edge leaves $Z$ in $\varrho_y$. Hence $\varrho_y$ has a cycle $C$ that only contains vertices of $Z$. As $y|_Z\equiv 0$ by our previous argument, this implies that $\varrho_y$ contains a cycle with no chips, but with $r(v)>0$ for $v\in C$. Hence $C\subseteq S_2$. 
\end{proof}

Notice that Theorem \ref{thm:reachability_hard} can be rephrased like this (note that though the vector $r$ was routing reduced in the previous proof, we did not use this property). 
\begin{cor}
Take a chip-and-rotor configuration $(x,\varrho_x)$ and vector $r\geq 0$. Then the $r$-bounded rotor-routing game started from $(x,\varrho_x)$ has maximal odometer $r$ if and only if for $(y, \varrho_y)=\pi_r(x,\varrho_x)$
\begin{align*}
S_1=\{v\in V \mid y(v) < 0 \text{ and } r(v)>0 \} =\emptyset
\end{align*} and for $T := \{v \in V \mid y(v) = 0 \text{ and }r(v) > 0\}$ we have
\begin{align*}
S_2 := \{v \in T \mid \text{every vertex reachable from $v$ in $\varrho_y$ is contained in $T$} \}=\emptyset. 
\end{align*}
\end{cor}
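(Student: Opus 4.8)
The plan is to recognise that this corollary is nothing but Theorem~\ref{thm:reachability_char} rephrased through Corollary~\ref{cor:rr_reach_vs_bounded_game}, together with the observation (already flagged in the text) that the proof of Theorem~\ref{thm:reachability_char} never used that $r$ is routing reduced. So the task reduces to checking that the two combinatorial arguments in that proof survive verbatim for an arbitrary $r\ge 0$. First I would set $(y,\varrho_y)=\pi_r(x,\varrho_x)$ and record the translation between the two statements: $odom(x,\varrho_x;r)=r$ holds if and only if there is a legal game from $(x,\varrho_x)$ to $(y,\varrho_y)$ with odometer exactly $r$. Indeed, the $r$-bounded game is a legal game, and if its maximal odometer equals $r$ then it ends at $(y,\varrho_y)$, since routing each vertex $v$ exactly $r(v)$ times in any order produces $\pi_r(x,\varrho_x)$; conversely a legal game with odometer exactly $r$ admits no further $r$-bounded routing, so it is maximal and Lemma~\ref{l:bounded_rr_abelian} gives $odom(x,\varrho_x;r)=r$.

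For the ``only if'' direction, assume $odom(x,\varrho_x;r)=r$, so we have a legal game from $(x,\varrho_x)$ to $(y,\varrho_y)$ with odometer $r$. I would reuse the necessity part of the proof of Theorem~\ref{thm:reachability_char}: first, any vertex $v$ that gets routed in a legal game has $y(v)\ge 0$, because after its first routing it can never drop below $0$; since $r(v)>0$ forces $v$ to be routed, this gives $S_1=\emptyset$. Second, an induction on the number of routings shows that in $\varrho_y$ every routed vertex reaches some $u$ with $y(u)>0$ or $r(u)=0$, i.e.\ some $u\notin T$; hence no vertex of $T$ lies in $S_2$, so $S_2=\emptyset$. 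Both arguments use only legality and the values of $r$, not routing-reducedness.

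For the ``if'' direction I would argue contrapositively: assume $odom(x,\varrho_x;r)\ne r$ and exhibit a witness in $S_1$ or $S_2$. If $S_1\ne\emptyset$ we are done, so suppose $S_1=\emptyset$. By Lemma~\ref{l:bounded_rr_abelian} the $r$-bounded game stops at some $(z,\varrho_z)$ leaving a nonempty set $Z$ of vertices routed fewer than $r(v)$ times and holding at most $0$ chips (in particular $r(v)>0$ on $Z$). Completing the routings to $r(v)$ each is an illegal game ending at $(y,\varrho_y)$ that only routes vertices of $Z$; so chips can only leave $Z$ or stay inside it, and together with $z(Z)\le 0\le y(Z)$ this forces $z(v)=y(v)=0$ for all $v\in Z$ and that no chip leaves $Z$ in this phase. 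Consequently each non-sink $v\in Z$ emitted its last chip into $Z$, so $\varrho_y(v)$ points inside $Z$; thus $\varrho_y|_Z$ has no outgoing edge and, if $Z$ has no sink, minimum out-degree $1$, hence contains a cycle $C\subseteq Z$ with $y|_C\equiv 0$ and $r>0$ on $C$, so $C\subseteq S_2$. (If instead $Z$ contains a sink $v$, then $v$ is the only vertex of $\varrho_y$ reachable from $v$ and $v\in T$, so $v\in S_2$.) This is precisely the sufficiency argument of Theorem~\ref{thm:reachability_char}, again without any appeal to routing-reducedness.

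The only place I expect friction is the bookkeeping in degenerate cases: vertices of out-degree $0$ carrying $r(v)>0$, whose routings are vacuous, and the convention that ``reachable from $v$ in $\varrho_y$'' includes $v$ itself, which is what makes a sink in $Z$ genuinely land in $S_2$. Once these conventions are fixed the corollary follows with no new idea beyond re-reading the proof of Theorem~\ref{thm:reachability_char}; in particular, the $S_1$ and $S_2$ conditions are a literal transcription of what that proof establishes.
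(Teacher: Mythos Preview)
Your proposal is correct and follows exactly the route the paper intends: the corollary is presented there not with a fresh proof but merely as a rephrasing of Theorem~\ref{thm:reachability_char}, with the parenthetical remark that routing-reducedness of $r$ was never used. Your write-up makes this explicit and even handles the sink degenerate case more carefully than the paper does; no new idea is needed or added.
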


More generally, one could ask what is the complexity of computing the maximal odometer $odom(x,\varrho_x;r)$ for a bounded rotor-routing game. Note that even though we can decide the reachability problem in polynomial time, it is unclear how to compute $odom(x,\varrho_x;r)$.

We note that the computation of the rotor-routing action of Holroyd et al \cite{Holroyd08} is a similar problem, whose complexity is also open.

Finally, we note that this situation is similar to what can be seen for the chip-firing reachability problem for Eulerian digraphs: There also, the reachability problem can be solved in polynomial time, but the computation of the maximal odometer of the bounded game is open \cite{chip-reach}.

\section*{Acknowledgement}
I would like to thank Lionel Levine, Swee Hong Chan, Viktor Kiss and Bálint Hujter for inspiring discussions.

\bibliographystyle{plain}
\bibliography{Bernardi}

\begin{thebibliography}{10}

\bibitem{BN-Riem-Roch}
Matthew Baker and Serguei Norine.
\newblock Riemann--{R}och and {A}bel--{J}acobi theory on a finite graph.
\newblock {\em Adv. Math.}, 215(2):766--788, 2007.

\bibitem{BL92}
Anders Bj{\"o}rner and L{\'a}szl{\'o} Lov{\'a}sz.
\newblock Chip-firing games on directed graphs.
\newblock {\em J. Algebraic Combin.}, 1(4):305--328, 1992.

\bibitem{BLS91}
Anders Bj{\"o}rner, L{\'a}szl{\'o} Lov{\'a}sz, and Peter~W. Shor.
\newblock Chip-firing games on graphs.
\newblock {\em European J. Combin.}, 12(4):283--291, 1991.

\bibitem{Bond-Levine}
Benjamin Bond and Lionel Levine.
\newblock Abelian networks {I}. {F}oundations and examples.
\newblock {\em SIAM J. Discrete Math.}, 30(2):856--874, 2016.

\bibitem{Abelian_nonhalting}
S.~H. Chan and L.~Levine.
\newblock Abelian networks iv. dynamics of nonhalting networks.
\newblock arXiv:1804.03322, 2018.

\bibitem{coeulerian}
Matthew Farrell and Lionel Levine.
\newblock Co{E}ulerian graphs.
\newblock {\em Proc.~Amer.~Math.~Soc.}, 144:2847--2860, 2016.

\bibitem{groetschel-lovasz-shrijver}
Martin Gr\"otschel, L\'aszl\'o Lov\'asz, and Alexander Schrijver.
\newblock {\em Geometric Algorithms and Combinatorial Optimization}.
\newblock Springer-Verlag Berlin Heidelberg, 1988.

\bibitem{Holroyd08}
Alexander~E. Holroyd, Lionel Levine, Karola M\'esz\'aros, Yuval Peres, James
  Propp, and David~B. Wilson.
\newblock Chip-firing and rotor-routing on directed graphs.
\newblock In Vladas Sidoravicius and Maria~Eulália Vares, editors, {\em In and
  Out of Equilibrium 2}, volume~60 of {\em Progress in Probability}, pages
  331--364. Birkh\"auser Basel, 2008.

\bibitem{chip-reach}
B\'alint Hujter, Viktor Kiss, and Lilla T\'othm\'er\'esz.
\newblock On the complexity of the chip-firing reachability problem.
\newblock {\em {P}roceedings of the {A}merican {M}athematical {S}ociety},
  145:3343--3356, 2017.

\bibitem{Pham_rotor}
Trung~Van Pham.
\newblock Orbits of rotor-router operation and stationary distribution of
  random walks on directed graphs.
\newblock {\em Advances in Applied Mathematics}, 70:45--53, 2015.
\newblock arXiv:1403.5875.

\bibitem{Tarjan}
Robert Tarjan.
\newblock Depth-first search and linear graph algorithms.
\newblock {\em SIAM J. Comput.}, 1(2):146--160, 1972.

\bibitem{thorup}
Mikkel Thorup.
\newblock Firing games.
\newblock Technical Report 94/15, University of Copenhagen, 1994.

\bibitem{tezisem}
Lilla T\'othm\'er\'esz.
\newblock The chip-firing game.
\newblock PhD thesis, http://web.cs.elte.hu/~tmlilla/tezis.pdf, 2017.

\bibitem{alg_rotor}
Lilla Tóthmérész.
\newblock Algorithmic aspects of rotor-routing and the notion of linear
  equivalence.
\newblock {\em Discrete Applied Mathematics}, 236:428 -- 437, 2018.

\end{thebibliography}

\end{document}